\numberwithin{equation}{section}
\newcommand{\norm}[1]{\left\Vert#1\right\Vert}
\newtheorem{theorem}{Theorem}[section]
\newtheorem{lemma}{Lemma}[section]
\newtheorem{definition}{Definition}[section]
\newtheorem{remark}{Remark}[section]
\newtheorem{algorithm}{Algorithm}[section]
\newcommand{\bff}{{  f}}
\def\norm#1{\|#1\|}
\def \b0{{\mathbf 0}}
\def \bu{{\mathbf u}}
\def \be{{\mathbf e}}
\def \bq{{\mathbf q}}
\begin{document}

\date{}

\author{Mustafa Aggul \footnotemark[1]\, \hspace{.1in}\hspace{.1in} Song\"{u}l Kaya\footnotemark[2] }
\title{Defect-Deferred Correction Method Based on a Subgrid Artificial Viscosity Model for Fluid-Fluid Interaction}

\maketitle

\renewcommand{\thefootnote}{\fnsymbol{footnote}}

\footnotetext[1]{Department of Mathematics, Hacettepe University, 06800, Ankara, Turkey; email:
mustafaaggul@hacettepe.edu.tr}

\footnotetext[2]{Department of Mathematics, Middle East
Technical University, 06800, Ankara, Turkey; email:
smerdan@metu.edu.tr}


\begin{abstract}
A defect-deferred correction method, increasing both temporal and spatial accuracy, for fluid-fluid interaction problem with nonlinear interface condition is considered by geometric averaging of the previous two-time levels. In the defect step, an artificial viscosity is added only on the fluctuations in the velocity gradient by removing this effect on a coarse mesh. The dissipative influence of the artificial viscosity is further eliminated in the correction step while gaining additional temporal accuracy at the same time. The stability and accuracy analyses of the resulting algorithm are investigated both analytically and numerically.

\end{abstract}

{\bf Keywords}:  fluid-fluid interaction, subgrid artificial viscosity, defect-deferred
correction

\section{Introduction}

In this paper, we consider a decoupled time stepping method for a fluid-fluid interaction problem. The coupling of Navier-Stokes equations with nonlinear interface condition is typical for terrestrial applications, such as atmosphere-ocean interaction or layers of stratified fluid, see \cite{R1,R2}.  The model is given by:  find (for $i=1,2$) $u_{i}:\Omega_{i}\times [0,T]\rightarrow \mathbb{R}^d$ and $p_{i}: \Omega_{i}\times [0,T]\rightarrow \mathbb{R}$ satisfying (for $0<t\leq T$)
\begin{eqnarray}
\partial_{t}u_i -\nu_i\Delta u_i+u_i \cdot \nabla u_i + \nabla p_i &=&  f_i \qquad \mathrm{in}\ \Omega_i, \label{eq:atmo} \\
-\nu_i \hat{n}_i  \cdot \nabla u_i \cdot \tau &=& \kappa |u_i - u_j|( u_i - u_j )\cdot \tau \quad \mathrm{on}  \ I \ \mathrm{for} \ i,j = 1,2, \ i \neq j \,  , \label{eq:atmoI}\\
u_i \cdot \hat{n}_i  &=&0 \qquad \mathrm{on} \ I \ \mathrm{for} \ i,j = 1,2, \label{eq:atmoNONLINEAR}\\
\nabla \cdot u_i &=&0 \qquad  \mathrm{ in } \ \Omega_i,  \label{eq:atmoNONLINEAR1}\\
u_i(x,0)&=&u_i^0(x) \qquad\mathrm{ in }\ \Omega_i , \label{eq:atmoIC}\\
u_i&=&0 \qquad\mathrm{ on } \ \Gamma_i = \partial\Omega_i\setminus I . \label{eq:atmoBC}
\end{eqnarray}
Here, the domain $\Omega\subset \mathbb{R}^d$, ($d=2,3$) is a polygonal or polyhedral domain that consists of two subdomains $\Omega_1$ and $\Omega_2$, coupled across an interface $I$, for times $t\in [0,T]$. The unknown velocity fields and pressure are denoted by $u_i$ and $p_i$. Also, $|\cdot|$ represents the Euclidean norm and the vectors $\hat{n}_i$ represent the unit normals on $\partial \Omega_i$, and $\tau$ is any vector such that $\tau\cdot\hat{n}_i=0$.  Further, the kinematic viscosity is $\nu_i$ and the body forcing on velocity field  is $f_i$ in each subdomain. Here, $\kappa$ denote the friction parameter for which frictional drag force is assumed to be proportional to the square of the jump of the velocities across the interface.

The main characteristic of the proposed defect-deferred correction (DDC) algorithm is the use of a projection-based variational multiscale method (VMS) as a predictor (defect) step for fluid-fluid interaction problems. Here, the geometric averaging (GA) of the coupling terms is considered at the interface. In VMS, since stabilization acts only on the fluctuations in the velocity gradient, the proposed algorithm is called subgrid artificial viscosity (SAV) based defect-deferred correction (SAV-DDC) method. New SAV based defect step indeed increases the efficiency of the DDC method. The scheme replaces the artificial viscosity (AV) step of the defect-deferred correction (AV-DDC) method of \cite{ACEL18} by the SAV step. For smooth solutions, Theorem 5.3 shows the error of the SAV-DDC algorithm is second order in time. Section 6 includes numerical tests to confirm theory and establishes the advantages of the proposed approach over AV-DDC.

\subsection{Related Works}

In recent years, the atmosphere-ocean interaction problem has been attracted by many scientists to contribute to the simulation of these complex flows. For example, Refs. \cite{B1, B2, LTW1, LTW2} studied modeling of atmosphere-ocean problems and their numerical analysis. Different treatments of coupling terms at the interface are derived to improve the solution of these problems. The method in \cite{BK06} uses nonlinear interface conditions, whereas, in \cite{CHL09}, interface conditions for two heat equations are linearly coupled. In \cite{CHL12}, a decoupling approach, known as GA of the coupling terms at the interface, is introduced for nonlinear coupling of two Navier-Stokes equations. This idea leads to a decoupled and unconditionally stable method. The conditional stability and error estimate of the implicit/explicit (IMEX) method, also proposed in \cite{CHL12}, for a fluid-fluid interaction problem are considered in \cite{ZHS16}. Recently, Aggul et al. derived GA-VMS treatment of fluid-fluid interaction problems in \cite{AEKL20}.

For improving regularity aspects of numerical simulations, one way is to use the defect correction algorithm. Often, the use of the defect correction method leads to over correcting near layers, \cite{H82}. There are various proposals to stabilize the defect correction method, see \cite{EL89, HK88, HK92}. The synthesis of the subgrid stabilization method and defect correction method was studied in \cite{KLR} for steady-state Navier-Stokes equations. This work notes that this combination yields an efficient algorithm and keeps the best algorithmic features. On the other hand, to increase temporal accuracy, one idea is to combine with a deferred correction method along with some stabilizations, see \cite{S78, DG}. We refer to the reader \cite{B84, DG, S78} for comprehensive analytical insight. The predictor-corrector type algorithms considered in \cite{AL17, EL} employ AV for stabilization of all scales. In this approach, the over-diffusive effect of the first step (AV based defect step) is subtracted via a correction step. This discretization idea proved to be the key idea for unconditional stability and higher accuracy without extra computational cost. The defect-deferred correction method with an artificial diffusion step for \eqref{eq:atmo}-\eqref{eq:atmoBC} was studied in \cite{ACEL18}.

As noted in \cite{AL17}, since AV is introduced for all scales, it produces too diffusive approximation. The one considered herein employs SAV stabilization, associated with the VMS method, as a predictor step which introduces AV only for small scales. This novel SAV idea is proposed for convection-dominated equations by Layton \cite{LA01} for the original ideas given in \cite{GU99A,HU00,MAY}. Because of the attractiveness of this approach, there is a substantial amount of works on the projection-based VMS stabilization for fluid problems. We refer the reader to \cite{V17} for review. Also, the same replacement of the predictor step is studied in \cite{A20} for the one-domain Navier Stokes Equation and shown to improve numerical results. This paper is devoted to illustrating how SAV based DDC method contributes to the development of the numerical simulation for fluid-fluid interaction without sacrificing stability and convergence.

\section{Continuous and Discrete Problem}

To write weak formulation, we introduce some notation. As usual, the Sobolev space of functions whose first derivatives are in
$L^2(\Omega_i)$ for each subdomain is denoted by $H^1(\Omega_i), \,(i=1,2)$. The space $H^1_0(\Omega_i)$ denotes the subspace of $H^1(\Omega_i)$ of functions with zero trace on the boundary $\Gamma_i,\,(i=1,2)$ Let $(\cdot,\cdot)_{\Omega_i}$ and $\norm{\cdot}_{\Omega_i}$ denote the $L^2$ inner product and $L^2$ norm, respectively, and $\norm{\cdot}_I$ denotes the $L^3(I)$ norm at the interface.  The Hilbert space $(H^k(\Omega))^d$ is equipped with the norm $\|\cdot\|_k$.  The norm of the dual space of $H^{-1}(\Omega_i)$ of $H_0^1(\Omega_i)$ and the semi-norm of $H^k$, for $1\leq k < \infty$ are denoted by $\|\cdot\|_{-1,\Omega_i}$ and  $|\cdot|_k $, respectively. The other norms are labelled with subscripts.

To pose problem, the following functional spaces are introduced for $i=1,2$.

\begin{eqnarray}
&X_{i}&  :=\{v\in (L^2(\Omega_{i}))^d:\nabla v \in L^2(\Omega_{i})^{d\times d}, \, v= 0\,\mbox{  on  }\,\partial \Omega{_{i} \backslash I, \hspace{0.1in} v\cdot \hat{n}_i = 0 \mbox{  on  } I}\},
\nonumber
\\
&Q_{i}& = L_0^2(\Omega_{i}) := \{q \in L^2(\Omega_{i}): \int_{\Omega_{i}} q \ dx = 0\}. \nonumber
\end{eqnarray}
Here,  $$X=X_1 \times  X_2= \{ \bu=(u_1,u_2); u_i \in X_i, \, i=1,2\}, \quad Q=Q_1 \times  Q_2= \{ \bq=(q_1,q_2); q_i \in Q_i, \, i=1,2\}.$$

For defect-deferred finite element formulation, let $\Pi_H$ be a triangulation of $\Omega_i$ and $\Pi_h$ be a refinement of $\Pi_H$ or $\Pi_h=\Pi_H$. Define finite element spaces by $X_i^h\subset X_i$, $Q_i^h \subset Q_i$, $X^h \subset X, Q^h\subset Q$. Further, for coarse mesh space, define  $L_i^H \subset L^2(\Omega_i)^{d\times d}$. We assume that the spaces satisfy inf-sup condition, see \cite{GR79}.

The weak formulation of \eqref{eq:atmo}-\eqref{eq:atmoBC} can be obtained by multiplying with $(v_{i},q_{i}) \in (X_{i}, Q_{i})$ and integrating over the subdomains $\Omega_i$. Then the problem {reads}: Find $u_i:\Omega_i\times[0,T]\to X_i$, $p_i:\Omega_i\times[0,T]\to Q_i$ satisfying
\begin{eqnarray}
{(\partial_{t}u_i ,v_{i})_{\Omega_i} +\nu_i(\nabla  {u}_{i},\nabla v_{i})_{\Omega_i}
+\kappa \int_{I}(u_i-u_j) |u_i-u_j| v_{i}ds}
\nonumber\\
+(\nabla \cdot u_i,q_{i})_{\Omega_i}+{c_i({u}_{i};{u}_{i},v_{i})}-(p_i,\nabla\cdot v_{i})_{\Omega_i}=(\bff_i,v_{i})_{\Omega_i}, \label{weak1}
\end{eqnarray}
where $c_i(\cdot;\cdot,\cdot)$ denotes the explicitly skew-symmetrized nonlinear form
\begin{equation}
c_i(u;v,w)=\frac{1}{2}(u \cdot \nabla v,w)_{\Omega_i}-\frac{1}{2}(u \cdot \nabla w,v)_{\Omega_i}
\end{equation}
for functions $u,v,w \in X_i$, $i=1,2,\,i\neq j $ on $\Omega_i$.

For SAV based defect deferred correction discretization of  \eqref{eq:atmo}-\eqref{eq:atmoBC} by using the Euler method in time, consider a partition $0=t_0<t_1<\dots<t_{M+1}=T$ of the time interval $[0,T]$ and define $\Delta t=T/(M+1)$, $t_n=n\Delta t$.
\begin{algorithm}[Two Step SAV-DDC method]\label{alg:ddc}

Two step SAV-DDC formulation based on GA applied to the problem \eqref{weak1} reads as follows: {Given $\hat{u}_{h,i}^{0}, \hat{u}_{h,i}^{1} \in X_i^h$, $\hat{p}_{h,i}^{1} \in Q_i^h$,} find $(\hat{u}_{h,i}^{n+1},\hat{p}_{h,i}^{n+1},\mathbb{G}_i^{\mathbb{H},{n+1}})\in (X_{i}^h,Q_{i}^h,L_i^H)$, $n=1,...,M$, satisfying
\\
{\underline {\bf Defect Step}}
\\
\begin{eqnarray}
(\frac{{\hat{u}^{n+1}_{h,i}}-\hat{u}_{h,i}^{n}}{\Delta t} ,v_{h,i})_{\Omega_i} +(\nu_i+{\nu_{T,i}})(\nabla  \hat{u}_{h,i}^{n+1},\nabla v_{h,i})_{\Omega_i} +c_i(\hat{u}_{h,i}^{n+1}; \hat{u}_{h,i}^{n+1},v_{h,i})
-(\hat{p}_{h,i}^{n+1},\nabla \cdot v_{h,i})_{\Omega_i} \nonumber
\\
 + (\nabla\cdot \hat{u}_{h,i}^{n+1},q_{h,i})_{\Omega_i}+\kappa \int_{I}|[\hat{\bu}_h^n]|\hat{u}_{h,i}^{n+1}v_{h,i}ds
-\kappa \int_{I}\hat{u}_{h,j}^n|[{\hat{\bu}}_h^n]|^{1/2}|[{ \hat{\bu}}_h^{n-1}]|^{1/2}v_{h,i}ds \nonumber
\\
=(\bff_i^{n+1},v_{h,i})_{\Omega_i}+{\nu_{T,i}}(\mathbb{G}_i^{\mathbb{H},n},\nabla v_{h,i})_{\Omega_i} \label{BE7}\\
(\mathbb{G}_i^{\mathbb{H},n}-\nabla \hat{u}_{h,i}^n, \mathbb{L}_i^H)_{\Omega_i}=0,\label{alg4}
\end{eqnarray}
for all $(v_{h,i},q_{h,i},\mathbb{L}_i^H) \in (X_{i}^h,Q_{i}^h,L_i^H)$. Here, $\nu_{T,i}$ is user-selected positive, constant parameter and typically $\mathcal{O}(h)$ It might be different in each subdomain $\Omega_i$.

Then, given $\hat{u}_{h,i}^{n+1},\hat{p}_{h,i}^{n+1},\hat{p}_{h,i}^{n}$ and $\tilde{u}_{h,i}^n$, find $(\tilde{u}_{h,i}^{n+1},\tilde{p}_{h,i}^{n+1}) \in (X_{i}^h,Q_i^h)$ satisfying

{\underline{\bf Correction Step}}
\begin{multline}
\left( \frac{\tilde{u}_{h,i}^{n+1} - \tilde{u}_{h,i}^n}{\Delta t} ,v_{h,i} \right)_{\Omega_i}+ (\nu_i+{\nu_{T,i}})\left(\nabla \tilde{u}_{h,i}^{n+1},\nabla v_{h,i} \right)_{\Omega_i}  + c_i\left(\tilde{u}_{h,i}^{n+1}; \tilde{u}_{h,i}^{n+1}, v_{h,i} \right) \\
+
\kappa \int_{I}|[{\bf \tilde{u}}_h^n]|\tilde{u}_{h,i}^{n+1}v_{h,i}ds
-\kappa \int_{I}\tilde{u}_{h,j}^n|[\tilde{\bu}_h^n]|^{1/2}|[{\tilde{\bu}}_h^{n-1}]|^{1/2}v_{h,i}ds \\
-(\tilde{p}_{h,i}^{n+1},\nabla \cdot  v_{h,i})_{\Omega_i}
+ (\nabla\cdot \tilde{u}_{h,i}^{n+1},q_{h,i})_{\Omega_i}
=\left( \frac{f_i^{n+1}+f_i^n}{2},  v_{h,i} \right)_{\Omega_i}
\\
+ \frac{\Delta t(\nu_i+{\nu_{T,i}})}{2}\left( \nabla (\frac {\hat{u}_{h,i}^{n+1}-\hat{u}_{h,i}^n}{\Delta t}),\nabla  v_{h,i}\right)_{\Omega_i}
+ {\nu_{T,i}}\left(\nabla(\frac{\hat{u}_{h,i}^{n+1}+\hat{u}_{h,i}^{n}}{2}) , \nabla v_{h,i}\right)_{\Omega_i}
\\
-\frac {\kappa}{2}\Delta t\int_I \hat{u}_{h,i}^{n+1}(\frac{|[\hat{\bu}_h^{n+1}]|-|[\hat{\bu}_h^n]|}{\Delta t}) v_{h,i}ds
+
\frac {\kappa}{2}\Delta t\int_I |[\hat{\bu}_h^n]|(\frac{\hat{u}_{h,i}^{n+1}-\hat{u}_{h,i}^n}{\Delta t}) v_{h,i}ds
\\
-\kappa \int_I \hat{u}_{h,j}^n|[\hat{\bu}_h^n]|^{1/2}|[\hat{\bu}_h^{n-1}]|^{1/2} v_{h,i}ds
+\frac {\kappa}{2} \int_I |[\hat{\bu}_h^{n+1}]| \hat{u}_{h,j}^{n+1}  v_{h,i} ds+\frac {\kappa}{2} \int_I |[\hat{\bu}_h^n]| \hat{u}_{h,j}^n v_i ds
\\
+\frac{1}{2}c_i(\hat{u}_{h,i}^{n+1}; \hat{u}_{h,i}^{n+1}, v_{h,i})
-\frac{1}{2}c_i(\hat{u}_{h,i}^n; \hat{u}_{h,i}^n, v_{h,i})-\left( \frac{\hat{p}_{h,i}^{n+1}-\hat{p}_{h,i}^n}{2}, \nabla \cdot  v_{h,i} \right)_{\Omega_i},\ \forall  v_{h,i} \in X_{i,h}\label{eq:ddc2}.
\end{multline}
\end{algorithm}
In this method, the first step of the algorithm entails the calculation of the predictor step with an added subgrid eddy viscosity term that is GA-VMS method of \cite{AEKL20}. Since the term $\mathbb{G}_i^{\mathbb{H},n}$ in (\ref{alg4}) is defined on a relatively large scales,  the stabilization is effective only on the small scales with an added term in (\ref{BE7}). In this way, while effective stabilization is less dissipative, the second step of algorithm increases time accuracy from first order to second order without increasing computational cost. In the literature for the choice of the eddy viscosity parameter ${\nu_{T,i}}$ in each subdomian, the Smagorisnky model \cite{s63} or a van Driest damping \cite{P00} seems to be commonly used. Based on ideas from \cite{AEKL20,VS} and the error estimation, numerical studies were performed with ${\nu_{T,i}}=h$.

\section{Preliminaries}
In this section, let us introduce some definitions and inequalities.
\begin{definition} \label{defpro}
The $L^2$ projection ${P}^H $of a given function $\mathbb{L}_i$ onto the finite element space $L_i^H$ is the solution of the following: find $\hat{\mathbb{L}}_i= {P}^H \mathbb{L}_i\in L_i^H$ such that
\begin{eqnarray} \label{pro}
(\mathbb{L}_i-{P}^H \mathbb{L}_i, S_H)=0,
\end{eqnarray}
for all $S_H \in L_i^H$.
\end{definition}
Hence, we get
\begin{eqnarray}\label{pro2}
	\|\mathbb{L}_i-{P}^H \mathbb{L}_i\| \leq CH^k\|\mathbb{L}_i\|_{k+1},
\end{eqnarray}
for all $\mathbb{L}_i \in (L(\Omega_i))^{d\times d}\cap (H^{k+1}(\Omega_i))^{d\times d}$.

One can find in the literature that an efficient implementation of (\ref{alg4}) depends on the choice of space spaces  $L_i^H$, see e.g.,\cite{VS}. Based on our experiences with a projection based variational multiscale method, piecewise constant $P0$ or piecewise linear $P1$ are usually the best choices for $L_i^H$ spaces. For brevity, numerical studies obtained with $L_i^H=Pk$ will be presented here with Taylor-Hood finite elements $P{k+1}-P{k}$. We also note that in our numerical studies, we will use single mesh, which is $H=h$. Although, it is expensive to store the velocity gradient (particularly in $3d$), this choice will be the same as storing 4 additional pressures (3 when symmetric gradient is used in the diffusion term), which has relatively very low degrees of freedom comparing to the degrees of freedom of the velocity space. As we will show later this choice also provides good accuracy.

Denoting the corresponding Galerkin approximations of $(u_i,p_i)$ in $(X_i^h,Q_i^h)$ by $(v_{h,i},q_{h,i})$,  one can assume that the following approximation assumptions (see \cite{GR79}):
\begin{eqnarray}
\inf_{v_{h,i} \in X_i^h} \Big(\|u_i-v_{h,i}\|+\|\nabla(u_i-v_{h,i})\|\Big)&\leq& Ch^{k+1} \|u_i\|_{k+1},\label{inp2}\\\inf_{q_{h,i} \in Q_i^h} \|p_i-q_{h,i}\|&\leq& Ch^{k} \|p_i\|_{k}. \label{inp}
\end{eqnarray}



The following lemmas are required for the analysis and their proof can be found in \cite{CHL12}.
\begin{lemma} \label{Standard_ineqs} Let $\alpha, \beta, \theta \in H^1(\Omega_i)$  for $i=1,2$,  then there exists constants $ C(\Omega_i)>0$ such that
	\begin{eqnarray}
	c_i({\alpha};{\beta},\theta)_{\Omega_i}&\leq& C (\Omega_i) \|\alpha\|^{1/2}_{\Omega_i}\|\nabla \alpha\|_{\Omega_i}^{1/2}\|\nabla \beta\|_{\Omega_i}\|\nabla \theta\|_{\Omega_i},\nonumber\\
	\int_{I}\alpha |[\beta]| \theta &\leq& C(\Omega_i) \|\alpha \|_I ||[\beta]||_I \|\theta \|_I, \nonumber\\
	\|\alpha\|_I &\leq& C(\Omega_i) \left({\color{red} } \|\alpha\|^{1/4}_{\Omega_i}\|\nabla \alpha\|^{3/4}_{\Omega_i} { + \|\alpha\|^{1/6}_{\Omega_i}\|\nabla \alpha\|^{5/6}_{\Omega_i} }\right).
	\end{eqnarray}
\end{lemma}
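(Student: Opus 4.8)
The three estimates are of increasing difficulty, and I would treat them in the stated order. The plan for the first is to exploit the skew-symmetric structure: writing $c_i(\alpha;\beta,\theta)=\tfrac12(\alpha\cdot\nabla\beta,\theta)_{\Omega_i}-\tfrac12(\alpha\cdot\nabla\theta,\beta)_{\Omega_i}$ and bounding each trilinear term by a three-factor H\"older inequality. In two dimensions I would split the factors as $L^4\times L^2\times L^4$ and invoke the Ladyzhenskaya inequality $\|v\|_{L^4(\Omega_i)}\le C\|v\|_{\Omega_i}^{1/2}\|\nabla v\|_{\Omega_i}^{1/2}$; in three dimensions I would instead split as $L^3\times L^2\times L^6$, using the Gagliardo--Nirenberg bound $\|v\|_{L^3(\Omega_i)}\le C\|v\|_{\Omega_i}^{1/2}\|\nabla v\|_{\Omega_i}^{1/2}$ on the first factor and the Sobolev embedding $H^1\hookrightarrow L^6$ on the last. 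A Poincar\'e inequality (available because the relevant fields vanish on $\Gamma_i$) then converts the full $H^1$ norms of $\beta$ and $\theta$ into gradient norms, and both dimensions collapse to the same product $\|\alpha\|_{\Omega_i}^{1/2}\|\nabla\alpha\|_{\Omega_i}^{1/2}\|\nabla\beta\|_{\Omega_i}\|\nabla\theta\|_{\Omega_i}$.

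The second estimate is immediate: since $\|\cdot\|_I$ is the $L^3(I)$ norm, the generalized H\"older inequality with three conjugate exponents $3,3,3$ applied on the interface gives $\int_I \alpha\,|[\beta]|\,\theta \le \|\alpha\|_I\,\bigl\||[\beta]|\bigr\|_I\,\|\theta\|_I$, and $\bigl\||[\beta]|\bigr\|_I=\|[\beta]\|_I$ pointwise.

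The third estimate is the heart of the lemma and where I expect the real work. The idea is a trace theorem followed by a multiplicative (Gagliardo--Nirenberg) interpolation. First I would use the trace map $H^1(\Omega_i)\to H^{1/2}(I)$ and the Sobolev embedding of $H^{1/2}$ on the $(d-1)$-dimensional interface into $L^q(I)$ for the admissible $q$, then interpolate between $L^2(\Omega_i)$ and the gradient seminorm to produce the homogeneous product $\|\alpha\|_{\Omega_i}^{a}\|\nabla\alpha\|_{\Omega_i}^{b}$ with $a+b=1$. A scaling (dimensional-homogeneity) computation fixes the exponents: matching the $\lambda$-powers of $\|\alpha\|_{L^q(I)}$, $\|\alpha\|_{\Omega_i}$, and $\|\nabla\alpha\|_{\Omega_i}$ under a dilation gives $b=\tfrac{d}{2}-\tfrac{d-1}{q}$. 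For $d=2$ this yields the pair $(\tfrac14,\tfrac34)$ once $\|\alpha\|_{L^3(I)}$ is first bounded by $\|\alpha\|_{L^4(I)}$ on the bounded interface, while for $d=3$ it yields $(\tfrac16,\tfrac56)$ directly at $q=3$. Retaining both contributions produces the stated sum, which is therefore valid uniformly in $d\in\{2,3\}$.

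The main obstacle, and the step I would be most careful about, is precisely this third inequality: establishing the multiplicative trace--interpolation form with the correct exponents and justifying that the lower-order term may be absorbed into $\|\nabla\alpha\|_{\Omega_i}$ via Poincar\'e. This last point quietly requires the fields to vanish on $\Gamma_i$ (i.e.\ to lie in $X_i$), since for a nonzero constant the right-hand side would degenerate to zero; I would make that hypothesis explicit. The scaling bookkeeping that pins down the two exponent pairs, and the need to treat $d=2$ and $d=3$ separately before combining them, are the delicate parts, whereas the H\"older and embedding steps are routine.
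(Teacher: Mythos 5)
The paper does not actually prove this lemma: its ``proof'' consists of citations, sending the first two bounds to Lemma 2.1 of \cite{CHL12} and the third to Theorem II.4.1 of \cite{Galdi94}. Your sketch correctly reconstructs exactly the content of those references --- H\"older plus Ladyzhenskaya/Gagliardo--Nirenberg and Poincar\'e for the skew-symmetrized trilinear form, the $(3,3,3)$ H\"older inequality on $I$ for the second bound, and the multiplicative trace--interpolation inequality for the third --- and your exponent bookkeeping is right: $b=\tfrac d2-\tfrac{d-1}{q}$ gives $(\tfrac14,\tfrac34)$ for $d=2$, $q=4$ and $(\tfrac16,\tfrac56)$ for $d=3$, $q=3$, which is precisely how the two-term right-hand side arises. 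Your observation that the first and third inequalities fail for nonzero constants as literally stated for all of $H^1(\Omega_i)$, and hence implicitly require $\alpha,\theta$ to vanish on $\Gamma_i$ (i.e.\ to lie in $X_i$) so that Poincar\'e applies, is a genuine point that the paper's statement glosses over; in the paper's usage the lemma is only ever invoked for functions in $X_i$ or $X_i^h$, so no harm results, but you are right to flag the hypothesis.
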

\begin{proof}
The first two bounds are standard - see, e.g., Lemma 2.1 on p. 1301 of \cite{CHL12}. The third bound can be found in \cite{Galdi94}, see Theorem II.4.1, p. 63.
\end{proof}
\begin{lemma} \label{lem:nnl}Let $\alpha_i \in X_i$, $\theta_j \in X_j$, $\boldsymbol{\beta} \in H^1(\Omega_i)$ and  $\epsilon_i,\epsilon_j,\varepsilon_i,\varepsilon_j$ ($i,j=1,2$) be positive constants, then one
\begin{eqnarray}
\kappa \int_{I} |\alpha_i| |[ \boldsymbol{\beta}]||\theta_j|&\leq& \frac{C\kappa^2}{4}\|\alpha_i\|^2_I ||[ \boldsymbol{\beta}]||_I^2+\frac{\epsilon_j}{\nu_j^5}\|\theta_j\|^2_{\Omega_j}+\frac{\nu_j}{2\epsilon_j}\|\nabla \theta_j\|^2_{\Omega_j},\\
	\kappa \int_{I} |\alpha_i| |[ \boldsymbol{\beta}]||\theta_j|&\leq& C\kappa^6\Big(\frac{\epsilon_i^5}{\nu_i^5}||[ \boldsymbol{\beta}]||_I^6\|\alpha_i\|^2_{\Omega_i}+\frac{\varepsilon_j^5}{\nu_j^5}||[ \boldsymbol{\beta}]||_I^6\|\theta_j\|^2_{\Omega_j}\Big) \nonumber\\&&+\frac{\nu_i}{4\epsilon_i}\|\nabla \alpha_i\|^2+\frac{\nu_j}{4\varepsilon_j} \|\nabla \theta_j\|^2,\\
	\kappa \int_{I} |\alpha_i| |[ \boldsymbol{\beta}]||\theta_j|&\leq&C\kappa^6\|\alpha_i\|^6_I\Big(\frac{\epsilon_1^5}{\nu_1^5}\|\beta_1\|_{\Omega_1}^2+\frac{\epsilon_2^5}{\nu_2^5}\|\beta_2\|^2_{\Omega_2}+\frac{2\varepsilon_j^5}{\nu_j^5}\|\theta_j\|_{\Omega_j}^2\Big)\nonumber\\&&+\frac{\nu_1}{4\epsilon_1}\|\nabla \beta_1\|_{\Omega_1}^2+\frac{\nu_2}{4\epsilon_2}\|\nabla \beta_2\|^2_{\Omega_2}+\frac{\nu_j}{2\beta_j}\|\nabla \theta_j\|_{\Omega_j}^2.
\end{eqnarray}
\end{lemma}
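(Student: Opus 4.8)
The plan is to derive all three bounds from a single pipeline anchored on Lemma~\ref{Standard_ineqs}: first strip the trilinear interface integral of its $L^3(I)$ structure, then trade interface norms for domain norms through the trace (Ladyzhenskaya-type) estimate, and finally balance the resulting products with a weighted Young's inequality. Concretely, for each of the three estimates I would begin from the second bound of Lemma~\ref{Standard_ineqs},
\[
\kappa \int_I |\alpha_i|\,|[\boldsymbol\beta]|\,|\theta_j| \le C\kappa\,\|\alpha_i\|_I\,\|[\boldsymbol\beta]\|_I\,\|\theta_j\|_I ,
\]
and the three statements then differ only in the bookkeeping choice of which interface factor is retained as an $\|\cdot\|_I$ norm and which are converted into $L^2(\Omega)$ norms by the third (trace) bound of Lemma~\ref{Standard_ineqs}.

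For the first inequality I would keep both $\|\alpha_i\|_I$ and $\|[\boldsymbol\beta]\|_I$ as interface norms, regrouping the product as $\big(\kappa\|\alpha_i\|_I\|[\boldsymbol\beta]\|_I\big)\cdot\|\theta_j\|_I$; one Young's step produces the term $\tfrac{C\kappa^2}{4}\|\alpha_i\|_I^2\|[\boldsymbol\beta]\|_I^2$ together with a leftover $\|\theta_j\|_I^2$, which the trace bound followed by a further Young's step converts into the two domain terms, the scaling parameter being tuned so that the gradient term carries the weight $\tfrac{\nu_j}{2\epsilon_j}$. For the second inequality I would instead retain only $\|[\boldsymbol\beta]\|_I$ and convert both $\|\alpha_i\|_I$ and $\|\theta_j\|_I$ through the trace bound, choosing the higher-order branch $\|\cdot\|_{\Omega}^{1/6}\|\nabla\cdot\|_{\Omega}^{5/6}$: this is exactly what generates the sixth power $\|[\boldsymbol\beta]\|_I^6$ after a three-factor Young's inequality with conjugate exponents $(\tfrac{12}{5},\tfrac{12}{5},6)$, the exponent $6$ simultaneously collecting $\kappa$ into $\kappa^6$ and the jump into $\|[\boldsymbol\beta]\|_I^6$, while a final $ab\le\tfrac12 a^2+\tfrac12 b^2$ separates $\|\alpha_i\|_{\Omega_i}^2$ from $\|\theta_j\|_{\Omega_j}^2$. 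The third inequality swaps the roles once more: I would keep $\|\alpha_i\|_I$ (which emerges as $\|\alpha_i\|_I^6$) and convert $\|[\boldsymbol\beta]\|_I$ by first using the triangle inequality $\|[\boldsymbol\beta]\|_I\le\|\beta_1\|_I+\|\beta_2\|_I$ and then tracing each piece, which accounts for the separate $\|\beta_1\|_{\Omega_1}$, $\|\beta_2\|_{\Omega_2}$, $\|\nabla\beta_1\|_{\Omega_1}$, $\|\nabla\beta_2\|_{\Omega_2}$ contributions; the coefficient $2$ on the $\|\theta_j\|_{\Omega_j}^2$ term arises precisely from summing the two branches issued by $\beta_1$ and $\beta_2$, each of which spawns a $\theta_j$ factor after the power-$6$ Young's step.

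The only genuinely delicate part is the constant bookkeeping in the weighted Young's inequalities: each scaling parameter must be fixed so that the diffusion-type terms emerge with exactly the coefficients $\tfrac{\nu}{\epsilon}$ (respectively $\tfrac{\nu}{\varepsilon}$) required by the statement, which then forces the companion zeroth-order terms to carry the reciprocal weight, and the $12/5$--$6$ conjugacy is what turns $\tfrac{\nu}{\epsilon}$ on the gradient into $\tfrac{\epsilon^5}{\nu^5}$ on the low-order factor. A secondary technicality is that the trace bound of Lemma~\ref{Standard_ineqs} is a sum of two branches, so strictly one should invoke $(a+b)^2\le 2a^2+2b^2$ and estimate both; since the $\tfrac16,\tfrac56$ branch is the one producing the advertised sixth powers and fifth-power viscosity weights, the other branch yields only lower-order analogues that are absorbed into the same structure with an enlarged constant $C$. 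Beyond this weight-matching I expect no conceptual obstacle, the remainder being a routine, if lengthy, chain of Young's inequalities.
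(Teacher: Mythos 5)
Your proposal is correct and follows essentially the same route as the paper, which proves the lemma in one line by combining the trilinear and trace bounds of Lemma~\ref{Standard_ineqs} with weighted Young's inequalities (deferring the details to Lemma 2.2 of \cite{CHL12}); your exponent bookkeeping, in particular the $(\tfrac{12}{5},\tfrac{12}{5},6)$ three-factor Young step that produces the $\kappa^6$, $\|[\boldsymbol\beta]\|_I^6$ and $\epsilon^5/\nu^5$ weights, is exactly the intended mechanism. The only caveat is that a few coefficients in the paper's statement (e.g.\ $\epsilon_j/\nu_j^5$ rather than $\epsilon_j^5/\nu_j^5$ in the first bound) appear to be typographical and need not be reproduced exactly, since only the structure of the weights matters downstream.
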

\begin{proof}
{ Use Lemma \ref{Standard_ineqs} and Young's inequality (see Lemma 2.2 on p. 1302 of \cite{CHL12}).}
\end{proof}
Along the paper, we use the following inequality whose proof can be found in \cite{HR86}.
\begin{lemma}\label{gron}[Discrete Gronwall Lemma]
	Let $\gamma_i,\theta_i,\beta_i,\alpha_i$ (for $i\geq 0$), and $\Delta t$, C be a non-negative numbers such that
	\begin{eqnarray*}
		\gamma_M +\Delta t \sum_{i=0}^{M} \theta_i \leq \Delta t \sum_{i=0}^{M} \alpha_i\gamma_i +\Delta t \sum_{i=0}^{M} \beta_i + C,\, \, \forall M\geq 0.
	\end{eqnarray*}
	Assume $\alpha_i\Delta t <1$ for all $i$, then,
	\begin{eqnarray*}
		\gamma_M +\Delta t \sum_{i=0}^{M} \theta_i \leq \exp\Bigg(\Delta t \sum_{i=0}^{M}\theta_i \frac{\alpha_i}{1-\alpha_i\Delta t }\Bigg) \Bigg(\Delta t \sum_{i=0}^{M} \beta_i + C\Bigg),\, \, \forall M\geq 0.
	\end{eqnarray*}
\end{lemma}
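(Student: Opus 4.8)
The plan is to reduce the stated inequality to the classical one-step Gronwall recursion and then iterate it. The first observation is that the quantity to be bounded, $\gamma_M+\Delta t\sum_{i=0}^{M}\theta_i$, is by hypothesis already dominated by the single quantity $E_M:=\Delta t\sum_{i=0}^{M}\alpha_i\gamma_i+\Delta t\sum_{i=0}^{M}\beta_i+C$. Hence it suffices to bound $E_M$ from above; the nonnegative term $\Delta t\sum_{i=0}^{M}\theta_i$ never has to be treated on its own, and the nonnegativity of the $\theta_i$ enters only to supply the pointwise estimate $\gamma_M\leq E_M$, which is the sole way $\gamma$ re-enters the argument.

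Next I would isolate the top-index term. Setting $\sigma_i:=(1-\alpha_i\Delta t)^{-1}$, which is well defined and satisfies $\sigma_i\geq 1$ precisely because of the hypothesis $\alpha_i\Delta t<1$, I separate the $i=M$ contribution from $E_M$ and use $\gamma_M\leq E_M$ to obtain $(1-\alpha_M\Delta t)E_M\leq E_{M-1}+\Delta t\beta_M$, that is, the recursion $E_M\leq\sigma_M E_{M-1}+\sigma_M\Delta t\beta_M$ valid for $M\geq1$, together with the base estimate $E_0\leq\sigma_0(\Delta t\beta_0+C)$ obtained the same way at $M=0$.

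I would then unroll this recursion by induction on $M$. Because every factor satisfies $\sigma_i\geq 1$ and the data sum $\Delta t\sum_{i=0}^{M}\beta_i+C$ is nondecreasing in $M$, each partial product $\prod_{j=i}^{M}\sigma_j$ appearing in the telescoped expression can be bounded above by the full product $\prod_{j=0}^{M}\sigma_j$. This collapses the estimate to $E_M\leq\left(\prod_{j=0}^{M}\sigma_j\right)\left(\Delta t\sum_{i=0}^{M}\beta_i+C\right)$.

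Finally, to convert the product into the exponential form of the statement (understood in its standard shape, with exponent $\Delta t\sum_{i=0}^{M}\tfrac{\alpha_i}{1-\alpha_i\Delta t}$), I would take logarithms and apply the elementary inequality $-\ln(1-x)\leq x/(1-x)$ for $0\leq x<1$, which is immediate since the difference vanishes at $x=0$ and has nonnegative derivative. With $x=\alpha_j\Delta t$ this gives $\prod_{j=0}^{M}\sigma_j\leq\exp\left(\Delta t\sum_{j=0}^{M}\tfrac{\alpha_j}{1-\alpha_j\Delta t}\right)$, and combining with $\gamma_M+\Delta t\sum_{i=0}^{M}\theta_i\leq E_M$ yields the claim. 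The only genuinely delicate point is the role of the hypothesis $\alpha_i\Delta t<1$: it is exactly what keeps each $\sigma_i$ positive and finite, what makes the recursion sign-correct, and what renders the logarithmic inequality applicable; without it both the isolation step and the product-to-exponential passage would break down.
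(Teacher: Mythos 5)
Your proof is correct. The paper does not prove this lemma itself but refers to Heywood and Rannacher \cite{HR86}, and your argument --- using nonnegativity of the $\theta_i$ only to extract $\gamma_M\le E_M$, isolating the top-index term to get the one-step recursion $E_M\le\sigma_M(E_{M-1}+\Delta t\,\beta_M)$ with $\sigma_i=(1-\alpha_i\Delta t)^{-1}$, unrolling it, and passing from the product $\prod_j\sigma_j$ to an exponential via $-\ln(1-x)\le x/(1-x)$ --- is precisely the standard proof found there. You were also right to read the conclusion in its standard form: the factor $\theta_i$ appearing inside the exponential in the statement as printed is a typographical error, and the exponent should be $\Delta t\sum_{i=0}^{M}\alpha_i/(1-\alpha_i\Delta t)$, which is exactly what your argument delivers.
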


\section{Stability and Convergence Analysis of the SAV Based Defect Step}

The stability and convergence analysis of SAV step of Algorithm 2.1. are presented in the current section. Due to the same consideration in \cite{AEKL20},  we only state the corresponding results. The proofs are standard. For stability, letting $v_{h,i}= {\hat{u}^{n+1}_{h,i}}$ in \eqref{BE7} and $\mathbb{L}_i^H =\mathbb{G}_i^{H,n} $ in \eqref{alg4} yields the following theorem.
\begin{theorem}[Stability of the first step approximation]\label{thm:stab_first}
Let $f_i\in L^2(0,T; H^{-1}(\Omega_i))$ for $i=1,2$. {The} scheme \eqref{BE7}-\eqref{alg4} is unconditionally stable and provides the following bound at time step $t=M+1$
	\begin{eqnarray}
	\lefteqn{\|{\hat{u}^{M+1}_{h,1}}\|_{\Omega_1}^2 +\|{\hat{u}^{M+1}_{h,2}}\|_{\Omega_2}^2+\Delta t({\nu_{T,1}} \|\nabla  \hat{u}_{h,1}^{M+1}\|^2_{\Omega_1}+{\nu_{T,2}}\|\nabla  {\hat{u}}_{h,2}^{M+1}\|^2_{\Omega_2})}\nonumber\\
	&&+{\Delta t \sum_{{n=1}}^M \Big(\nu_1\|\nabla  \hat{u}_{h,1}^{n+1}\|^2_{\Omega_1} + {\nu_{T,1}}\|\nabla  \hat{u}_{h,1}^{n+1} - \mathbb{G}_1^{\mathbb{H},n}\|^2_{\Omega_1}+{\nu_{T,1}} \|\nabla  \hat{u}_{h,1}^{n} - \mathbb{G}_1^{\mathbb{H},n}\|^2_{\Omega_1} \Big)}\nonumber\\
	&&+{\Delta t \sum_{{ n=1}}^M \Big(\nu_2\|\nabla  \hat{u}_{h,2}^{n+1}\|^2_{\Omega_2} + {\nu_{T,2}}\|\nabla  \hat{u}_{h,2}^{n+1} - \mathbb{G}_2^{\mathbb{H},n}\|^2_{\Omega_2}+{\nu_{T,2}} \|\nabla  \hat{u}_{h,2}^{n} - \mathbb{G}_2^{\mathbb{H},n}\|^2_{\Omega_2} \Big)}\nonumber\\
	&&+\kappa \Delta t \sum_{{n=1}}^M \int_{I}\Big{|}|[\hat{\bu}_h^n]|^{1/2}\hat{u}_{h,1}^{n+1}-|[\hat{\bu}_h^{n-1}]|^{1/2}\hat{u}_{h,2}^{n}\Big{|}^2ds+\kappa \Delta t \sum_{{n=1}}^M \int_{I}\Big{|}|[\hat{\bu}_h^n]|^{1/2}\hat{u}_{h,2}^{n+1}-|[\hat{\bu}_h^{n-1}]|^{1/2}\hat{u}_{h,1}^{n}\Big{|}^2ds \nonumber\\
		&&+\kappa \Delta t \int_{I}|[\hat{\bu}_h^M]| (|\hat{u}_{h,1}^{M+1}|^2+|\hat{u}_{h,2}^{M+1}|^2)ds\nonumber\\
	&\leq&\|{u^{{1}}_{h,1}}\|_{\Omega_1}^2+\|{u^{{1}}_{h,2}}\|_{\Omega_2}^2+\kappa \Delta t \int_{I}|[\bu_h^{{0}}]| (|u_{h,1}^{{1}}|^2+|u_{h,2}^{{1}}|^2)ds+\Delta t({\nu_{T,1}} \|\nabla u_{h,1}^{{1}}\|_{\Omega_1}^2 +{\nu_{T,2}}\|\nabla u_{h,2}^{{1}}\|_{\Omega_2}^2 )\nonumber\\&&+{\Delta t }\sum_{{ n=1}}^M(\nu_1^{-1}\|f_1^{n+1}\|_{-1,\Omega_1}^2+\nu_2^{-1}\|f_2^{n+1}\|_{-1,\Omega_2}^2)\label{stb}
	\end{eqnarray}
\end{theorem}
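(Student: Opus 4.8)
The plan is to run the standard energy argument already flagged in the text preceding the theorem: choose $v_{h,i}=\hat{u}^{n+1}_{h,i}$ and $q_{h,i}=\hat{p}^{n+1}_{h,i}$ in \eqref{BE7}, take $\mathbb{L}_i^H=\mathbb{G}_i^{\mathbb{H},n}$ in \eqref{alg4}, and sum over $i=1,2$. With these choices the skew-symmetrized convection term vanishes, $c_i(\hat{u}^{n+1}_{h,i};\hat{u}^{n+1}_{h,i},\hat{u}^{n+1}_{h,i})=0$, and the two pressure pairings cancel. The discrete time-derivative is treated by the polarization identity $(a-b,a)=\tfrac12(\|a\|^2-\|b\|^2+\|a-b\|^2)$, which furnishes the telescoping kinetic energy together with the nonnegative term $\tfrac{1}{2\Delta t}\|\hat{u}^{n+1}_{h,i}-\hat{u}^n_{h,i}\|^2$ that I simply discard. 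So after these reductions every remaining term is either a manifestly dissipative quantity or is recast by an exact algebraic identity; only the body force will require an inequality.

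For the SAV/VMS terms I would use \eqref{alg4} with $\mathbb{L}_i^H=\mathbb{G}_i^{\mathbb{H},n}$, which gives $\|\mathbb{G}_i^{\mathbb{H},n}\|^2=(\nabla\hat{u}^n_{h,i},\mathbb{G}_i^{\mathbb{H},n})$. Combining the dissipation $(\nu_i+\nu_{T,i})\|\nabla\hat{u}^{n+1}_{h,i}\|^2$ on the left with the source $\nu_{T,i}(\mathbb{G}_i^{\mathbb{H},n},\nabla\hat{u}^{n+1}_{h,i})$ on the right and completing squares yields the identity $(\nu_i+\nu_{T,i})\|\nabla\hat{u}^{n+1}_{h,i}\|^2-\nu_{T,i}(\mathbb{G}_i^{\mathbb{H},n},\nabla\hat{u}^{n+1}_{h,i})=\nu_i\|\nabla\hat{u}^{n+1}_{h,i}\|^2+\tfrac{\nu_{T,i}}{2}\|\nabla\hat{u}^{n+1}_{h,i}-\mathbb{G}_i^{\mathbb{H},n}\|^2+\tfrac{\nu_{T,i}}{2}\|\nabla\hat{u}^{n}_{h,i}-\mathbb{G}_i^{\mathbb{H},n}\|^2+\tfrac{\nu_{T,i}}{2}(\|\nabla\hat{u}^{n+1}_{h,i}\|^2-\|\nabla\hat{u}^{n}_{h,i}\|^2)$, where the last pair telescopes in $n$ to produce the endpoint gradient norms in the bound.

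The interface terms are the crux and the step I expect to be most delicate. Writing $A_n=|[\hat{\bu}_h^n]|^{1/2}$ and adding the two interface contributions over $i=1,2$, the geometric-averaging linearization is arranged precisely so that $A_n^2(|\hat{u}_{h,1}^{n+1}|^2+|\hat{u}_{h,2}^{n+1}|^2)-A_nA_{n-1}(\hat{u}_{h,2}^{n}\!\cdot\!\hat{u}_{h,1}^{n+1}+\hat{u}_{h,1}^{n}\!\cdot\!\hat{u}_{h,2}^{n+1})$ decomposes as one half of the sum of the two perfect squares $|A_n\hat{u}_{h,1}^{n+1}-A_{n-1}\hat{u}_{h,2}^{n}|^2$ and $|A_n\hat{u}_{h,2}^{n+1}-A_{n-1}\hat{u}_{h,1}^{n}|^2$, plus the telescoping interface-energy difference $\tfrac12(E_n-E_{n-1})$ with $E_n=|[\hat{\bu}_h^n]|(|\hat{u}_{h,1}^{n+1}|^2+|\hat{u}_{h,2}^{n+1}|^2)$. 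Getting the $\tfrac12$-power bookkeeping and the cross-subdomain regrouping exactly right is what makes this identity close; note that no interface term is estimated by an inequality, and this is exactly why GA yields \emph{unconditional} stability with no restriction on $\Delta t$.

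Finally, the lone inequality is on the forcing: by duality and Young, $(\bff_i^{n+1},\hat{u}^{n+1}_{h,i})\le \tfrac{\nu_i}{2}\|\nabla\hat{u}^{n+1}_{h,i}\|^2+\tfrac{1}{2\nu_i}\|f_i^{n+1}\|_{-1,\Omega_i}^2$. I would then multiply the per-step identity by $2\Delta t$, sum $n=1,\dots,M$, and telescope the kinetic, gradient, and interface energies. The $\tfrac{\nu_i}{2}$ part of the forcing absorbs exactly half of the $2\Delta t\,\nu_i\|\nabla\hat{u}^{n+1}_{h,i}\|^2$ dissipation, leaving the stated $\Delta t\,\nu_i\|\nabla\hat{u}^{n+1}_{h,i}\|^2$ on the left and $\Delta t\,\nu_i^{-1}\|f_i^{n+1}\|_{-1,\Omega_i}^2$ on the right; the telescoped terms produce the endpoint quantities at $M+1$ on the left and the initial-data quantities on the right. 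Because the right-hand side contains no term of the form (coefficient)$\times$(solution energy), the discrete Gronwall Lemma \ref{gron} is \emph{not} needed here — the estimate follows directly from the telescoping sum — so the only real work is verifying the two algebraic identities above.
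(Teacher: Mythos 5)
Your proposal is correct and follows exactly the route the paper indicates (the paper itself only sketches the choice $v_{h,i}=\hat{u}^{n+1}_{h,i}$, $\mathbb{L}_i^H=\mathbb{G}_i^{\mathbb{H},n}$ and defers to Lemma 4.2 of \cite{AEKL20}): the polarization identity for the time derivative, the projection property $\|\mathbb{G}_i^{\mathbb{H},n}\|^2=(\nabla\hat{u}^n_{h,i},\mathbb{G}_i^{\mathbb{H},n})$ to complete the squares in the subgrid term, the geometric-averaging regrouping of the interface terms into perfect squares plus a telescoping interface energy, and the single Young estimate on the forcing all check out and reproduce the stated bound term by term after multiplying by $2\Delta t$ and summing. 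Your observation that no Gronwall argument is needed is also consistent with the unconditional stability claimed in the theorem.
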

\begin{proof}
See Lemma 4.2 in \cite{AEKL20} for the proof.
\end{proof}

In order to establish the accuracy of the first step, we assume that all functions are sufficiently regular, i.e. the solution of \eqref{eq:atmo}-\eqref{eq:atmoBC} satisfies
\begin{eqnarray}
\bu \in{L^\infty(0,T;H^{k+1}(\Omega) \cap H^3(\Omega) )}, \quad
\partial_t \bu \in{L^\infty(0,T;H^{k+1}(\Omega)^d)}\label{reg},\quad \partial_{tt} \bu\in{L^\infty(0,T;H^{1}(\Omega)^d)} \label{r}.
\end{eqnarray}
The following discrete norms are used in the convergence analysis.
\begin{eqnarray}
|||u|||_{\infty,p}=\max\limits_{0\leq j \leq N}||u(t^j)||_p, \, \, |||u|||_{s,p}=\Big(\Delta t \sum\limits_{j=1}^{M}||u(t^j)||_p^s \Big)^{\frac{1}{s}}.
\end{eqnarray}

For finite element error analysis, first the weak formulation \eqref{weak1} is written for $v_{h,i}\in X_h^i $ and evaluated at $t^{n+1}$. Then, the resulting equation is subtracted from \eqref{BE7}-\eqref{alg4}. By using error decomposition, standard a priori error analysis along with tools Lemma \ref{Standard_ineqs}, Lemma \ref{lem:nnl} and Lemma \ref{gron}, one has the following error estimation.

With the notation of \cite{AEKL20}, let
$$D^{n+1} = \tilde{\nu}^5\Big(1+\kappa^6E^{n+1}+|||\nabla u|||_{\infty,\Omega}^4+{(\nu_{T,1}^2+\nu_{T,2}^2)h^{-2}}\Big)$$ where
$\tilde{\nu} = \max\{{(\nu_1+{\nu_{T,1}})^{-1}},{(\nu_2+{\nu_{T,2}})^{-1}}\}$ and $E^{n+1} = \max_{j}\{\max\{\|u(t^j)\|^6_I,\|u_h^j\|^6_I\}\}$ for $j=0,1,...,n+1$.

\begin{theorem}\label{thm:convdefect}{ Let the time step be chosen so that $\Delta t \le 1/D^{n+1}$. Then the following bound on the error holds under the regularity assumptions \eqref{r}:}
	\begin{eqnarray}
	\lefteqn{\|{\bf u}(t^{{M}+1})-{\bf u}^{{M}+1}\|^2  +{ \frac{3}{4}(\nu_1+{\nu_{T,1}})\Delta t\sum_{{n=1}}^{M}\|\nabla  (u_1(t^{n+1})-u_{h,1}^{n+1})\|^2}}\nonumber\\&&+2\kappa\Delta t \sum_{{ n=1}}^{M}\int_{I}|[{\bf u}^n]|  |{\bf u}(t^{n+1})-{\bf u}^{n+1}|^2ds +{ \frac{3}{4}(\nu_2+{\nu_{T,2}})\Delta t\sum_{{ n=1}}^{M}\|\nabla  (u_2(t^{n+1})-u_{h,2}^{n+1})\|^2}
	\nonumber\\&\leq& \|{\bf u}(t^{{1}})-{\bf u}_h^{{1}}\|^2+\frac{(\nu_1{ +{\nu_{T,1}}})\Delta t}{8}(2\|\nabla({ u_1}(t^{{1}})-u_{h,1}^{{1}})\|^2_{\Omega_1}+\|\nabla ({ u_1}(t^{{0}})-u_{h,1}^{{0}})\|^2_{\Omega_1})\nonumber\\[3pt]&&+\frac{(\nu_2{ +{\nu_{T,2}}})\Delta t}{8}(2\|\nabla({ u_{2}}(t^{{1}})-{u_{h,2}^{{1}}})\|^2_{\Omega_2}+\|\nabla ({ u_2}(t^{{0}})-u_{h,2}^{{0}})\|^2_{\Omega_2})\nonumber\\&&+C(\Delta t^2+h^{2k}+{(\nu_{T,2}^2(\nu_2+{\nu_{T,2}})^{-1}+\nu_{T,1}^2(\nu_1+{\nu_{T,1}})^{-1})H^{2k}}),\label{thm}
	\end{eqnarray}
	where $C$ is a generic constant depending only on ${f_i},{\nu_i+{\nu_{T,i}}},\Omega_i$.
\end{theorem}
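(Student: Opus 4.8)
The plan is to establish the error estimate \eqref{thm} by the standard variational argument for convergence of the defect (SAV-VMS) step, following the template of \cite{AEKL20} as the excerpt suggests. First I would write the continuous weak formulation \eqref{weak1} at time $t^{n+1}$ against test functions $v_{h,i}\in X_i^h$, and subtract the discrete defect equation \eqref{BE7}. Decompose the error as $u_i(t^{n+1})-\hat{u}_{h,i}^{n+1}=(u_i(t^{n+1})-\tilde{u}_i^{n+1})-(\hat{u}_{h,i}^{n+1}-\tilde{u}_i^{n+1})=:\eta_i^{n+1}-\phi_{h,i}^{n+1}$, where $\tilde{u}_i^{n+1}$ is an interpolant or Stokes/$L^2$ projection of $u_i(t^{n+1})$ into $X_i^h$, so that $\phi_{h,i}^{n+1}\in X_i^h$ is the finite element part of the error. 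I would then set $v_{h,i}=\phi_{h,i}^{n+1}$ to produce an energy identity. The left side yields, after the usual telescoping on the discrete time derivative (using $2(a-b,a)=\|a\|^2-\|b\|^2+\|a-b\|^2$) and the coercivity of the VMS diffusion term, the $\ell^\infty(L^2)$ norm of $\phi_h$ together with the dissipation terms $(\nu_i+\nu_{T,i})\Delta t\sum\|\nabla\phi_{h,i}^{n+1}\|^2$ and the favorable interface square-jump terms visible on the left of \eqref{thm}.

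The core of the work is bounding the right-hand side consistency and nonlinear terms. I would group them as: (i) the temporal truncation error from backward Euler, controlled by $\Delta t^2\|\partial_{tt}u\|^2$ using the regularity \eqref{r}; (ii) the approximation terms in $\eta_i$, handled via \eqref{inp2}-\eqref{inp} to give the $h^{2k}$ contribution; (iii) the VMS consistency term $\nu_{T,i}(\mathbb{G}_i^{\mathbb{H},n}-\nabla u_i,\nabla\phi_{h,i})$, where the projection property \eqref{pro} and its bound \eqref{pro2} produce the $\nu_{T,i}^2(\nu_i+\nu_{T,i})^{-1}H^{2k}$ term — the key device is to insert $P^H\nabla u_i$ and use $(\mathbb{G}_i^{\mathbb{H},n}-\nabla\hat{u}_{h,i}^n,\mathbb{L}_i^H)=0$ to trade the gradient fluctuation against the coarse projection error; (iv) the trilinear convection differences $c_i(u_i;u_i,\phi_{h,i})-c_i(\hat{u}_{h,i}^{n+1};\hat{u}_{h,i}^{n+1},\phi_{h,i})$, split by adding and subtracting intermediate arguments and estimated through the first bound of Lemma \ref{Standard_ineqs} followed by Young's inequality so that the $\|\nabla\phi_{h,i}\|$ factors are absorbed into the diffusion on the left; and (v) the nonlinear interface drag terms, where the geometric-averaged products $|[\hat{\bu}_h^n]|^{1/2}|[\hat{\bu}_h^{n-1}]|^{1/2}$ must be compared with the true $|[\mathbf u]|$, and these are dominated using the three interface estimates of Lemma \ref{lem:nnl} together with the trace inequalities of Lemma \ref{Standard_ineqs}.

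After all terms are bounded, I would collect the products $\alpha_n\|\phi_h^n\|^2$ (arising from the convective and interface nonlinearities, whose coefficients involve $D^{n+1}$ through $E^{n+1}$, $|||\nabla u|||_{\infty}$, $\tilde\nu$, and the $h^{-2}$ from inverse-type estimates on $\nu_{T,i}$) and invoke the discrete Gronwall Lemma \ref{gron}; the hypothesis $\Delta t\le 1/D^{n+1}$ is precisely what guarantees $\alpha_n\Delta t<1$ so Gronwall applies and the exponential factor is absorbed into the generic constant $C$. Finally, converting back from $\phi_h$ to the full error via the triangle inequality with the $\eta_i$ approximation bounds yields \eqref{thm}.

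\textbf{Main obstacle.} I expect the hardest and most delicate part to be step (v): controlling the nonlinear interface terms with the geometric averaging. The difficulty is threefold — the jump quantities appear under fractional powers $|[\cdot]|^{1/2}$ split across two time levels $n$ and $n-1$, so one must show the GA product approximates $|[\mathbf u(t^{n+1})]|$ to the right order; the $L^3(I)$-trace nonlinearity forces use of the sharp embedding inequalities of Lemma \ref{Standard_ineqs} with their awkward fractional exponents ($\|\cdot\|^{1/4}\|\nabla\cdot\|^{3/4}$ and $\|\cdot\|^{1/6}\|\nabla\cdot\|^{5/6}$); and the resulting coefficients must be packaged exactly into $D^{n+1}$ so that the Gronwall threshold $\Delta t\le 1/D^{n+1}$ emerges naturally. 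Keeping the $\kappa^6$ and $\tilde\nu^5$ scalings consistent through Lemma \ref{lem:nnl} while ensuring every $\|\nabla\phi_{h,i}\|^2$ is absorbed (leaving the clean $\tfrac34(\nu_i+\nu_{T,i})$ coefficient on the left of \eqref{thm}) is where the bookkeeping is most error-prone.
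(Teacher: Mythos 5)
Your proposal follows essentially the same route the paper takes: the paper itself only sketches this proof (subtract the discrete defect equation \eqref{BE7}--\eqref{alg4} from the weak form \eqref{weak1} at $t^{n+1}$, decompose the error, apply Lemmas \ref{Standard_ineqs}, \ref{lem:nnl} and the discrete Gronwall Lemma \ref{gron}) and defers the details to the analogous argument in the reference [AEKL20]. Your outline — including the use of the projection identity \eqref{pro}--\eqref{pro2} to generate the $H^{2k}$ term, the absorption of the convective and interface nonlinearities into $D^{n+1}$, and the role of $\Delta t \le 1/D^{n+1}$ in satisfying the Gronwall hypothesis — is consistent with that intended argument.
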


\begin{remark} \label{rr}
By using approximation inequalities \eqref{inp2}-\eqref{inp}, along with the choices of $H=h$, $\nu_{T,i} = h$ and $L_i^H=P_1$,  the error estimate for SAV based defect step is as follows: for Taylor-Hood finite elements ($P2-P1$)
$$\|error\|\leq C^*(\Delta t+h^2)$$
\end{remark}

\section{Stability and Convergence Analysis of the Correction Step}

It is also necessary to establish the stability of the second step. Both the stability and accuracy of the correction step depend on the accuracy of time derivative of the error in the first step. The following constant and assumptions depending on true solution $\bu$ will be used also in the proofs below.
Let $$C_{\bu}:=  \norm{\nabla \bu (x,t)}_{L^\infty(0,T;L^{\infty}(\Omega))}, \quad C_{\nabla \bu_{i,t}}:=\norm{\nabla \bu_{i,t}(x,t)}_{L^\infty(0,T;L^{\infty}(\Omega))}.$$
{\it Assumption 1}. There exists $\alpha>0$ such that $\alpha \leq \norm{\bu({\bf x},t)}, \forall {\bf x}\in I, \forall t\in(0,T]$.
\\
{\it Assumption 2}. For $i=0,1$, $0 <t \leq \Delta t, \forall  {\bf x} \in I$, true solution $\bu$ satisfies
$$\Big| \frac{\partial}{\partial t} (\bu_i(t))\Big| \leq C(\Delta t)^{1/4}.$$
Most parts of the proof of the following theorem follow the lines of Theorem 8 of \cite{ACEL18}, except subgrid stabilization terms in \eqref{BE7}-\eqref{alg4}. Thus, we give an outline of the proof.

\begin{theorem}[Accuracy of Time Derivative of the Error in Defect Step]\label{thm:convtimederiv}
 Let $u_i(\Delta t) \in H^2(\Omega_i)$,  $\Delta {\bf{u}} \in L^2
(0,T; L^2 (\Omega) )$ and ${\bf{u}}_{tt} ,{\bf{u}}_{t} , {\bf{u}}\in L^2
(0,T; L^2 (\Omega) )$. Let $min(h,\Delta t)< C(\frac{\nu_i+h_i}{\kappa})$. Let also $max(h,\Delta t,\nu_{T,1},\nu_{T,2}) \le \frac{\alpha}{4\sqrt{C^*}}$, where $\alpha$ is the constant introduced in Assumption 1, and $C^*$ is the constant from Remark \ref{rr}.

Then $\exists C >0$ independent of $h, \nu_{T,i}, \,
\Delta t$ such that for any $n \in \{ 0,1,2,\cdots ,
M-1=\frac{T}{\Delta t} - 1 \}$ , the discrete time derivative of the error
satisfies
\begin{multline} \label{timederiverraccuracy}
\lVert \frac{\be^{n+1}-\be^n}{\Delta t} \rVert^2
+(\nu_1+\nu_{T,1})\Delta t \sum_{j=1}^{n}\lVert \nabla \left(\frac{e_1^{j+1}-e_1^j}{\Delta t}\right) \rVert^2
+(\nu_2+\nu_{T,2})\Delta t \sum_{j=1}^{n}\lVert \nabla \left(\frac{e_2^{j+1}-e_2^j}{\Delta t}\right) \rVert^2\\
\leq C\left(h^{2k}+(\Delta t)^2+\nu_{T,1}^2+\nu_{T,2}^2 \right).
\end{multline}
where $e_i^{n} = u_i^{n}-\hat{u}_{h,i}^{n}$ , $i=1,2$.
\end{theorem}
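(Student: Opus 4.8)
The plan is to derive the defect-step error equation, \emph{difference it in time}, test with the discrete time difference of the finite element error, and close with the discrete Gronwall lemma (Lemma \ref{gron}). Throughout I write $e_i^n = u_i(t^n)-\hat u_{h,i}^n$ and split it as $e_i^n=\eta_i^n-\phi_{h,i}^n$, where $\eta_i^n=u_i(t^n)-\tilde u_{h,i}^n$ is the interpolation error for a suitable $\tilde u_{h,i}^n\in X_i^h$, controlled by \eqref{inp2}--\eqref{inp}, and $\phi_{h,i}^n=\hat u_{h,i}^n-\tilde u_{h,i}^n\in X_i^h$ is the discrete part. First I would subtract the defect step \eqref{BE7}--\eqref{alg4} from the weak formulation \eqref{weak1} evaluated at $t^{n+1}$, producing the level-$(n+1)$ error equation with a backward-Euler consistency term $\tau_i^{n+1}=\partial_t u_i(t^{n+1})-\tfrac{u_i(t^{n+1})-u_i(t^n)}{\Delta t}$, the projection-stabilization residual from \eqref{alg4}, the pressure term, and the geometric-averaged interface contributions. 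Writing the identical equation at level $n$ and subtracting the two yields an evolution equation for $d_i^n:=\tfrac{e_i^{n+1}-e_i^n}{\Delta t}$, whose right-hand side involves the second time difference of $\tau_i$, the differenced convection forms, and the differenced interface forms.

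Next I would test this differenced equation with $v_{h,i}=\tfrac{\phi_{h,i}^{n+1}-\phi_{h,i}^n}{\Delta t}$ (restricted, via the inf-sup condition, so the differenced pressure enters only at order $h^k$ through its best approximation). The time term yields, through the polarization identity, control of $\lVert\tfrac{\be^{n+1}-\be^n}{\Delta t}\rVert^2$ plus a telescoping nonnegative sum, while the viscous terms produce exactly the dissipation $(\nu_i+\nu_{T,i})\Delta t\sum_j\lVert\nabla\tfrac{\phi_{h,i}^{j+1}-\phi_{h,i}^j}{\Delta t}\rVert^2$ appearing on the left of \eqref{timederiverraccuracy}. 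The subgrid terms, which are the genuinely new feature relative to Theorem 8 of \cite{ACEL18}, are handled with the $L^2$-projection definition \eqref{pro} and its approximation property \eqref{pro2}: the differenced stabilization $\nu_{T,i}\bigl(\nabla\tfrac{\phi_{h,i}^{n+1}-\phi_{h,i}^n}{\Delta t}-P^H\nabla\tfrac{\hat u_{h,i}^{n+1}-\hat u_{h,i}^n}{\Delta t},\,\nabla\tfrac{\phi_{h,i}^{n+1}-\phi_{h,i}^n}{\Delta t}\bigr)$ contributes a term of order $\nu_{T,i}^2$ after Young's inequality, consistent with the $\nu_{T,1}^2+\nu_{T,2}^2$ on the right. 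The convection forms, after using the skew symmetry of $c_i$ to annihilate the diagonal term, are bounded by Lemma \ref{Standard_ineqs} together with the defect-step error bound of Theorem \ref{thm:convdefect} and Remark \ref{rr} and the Lipschitz constants $C_{\bu},C_{\nabla \bu_{i,t}}$, then absorbed into the viscous dissipation.

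The hard part will be the differenced interface terms, because geometric averaging introduces the nonsmooth factors $|[\hat{\bu}_h^n]|^{1/2}|[\hat{\bu}_h^{n-1}]|^{1/2}$, whose time differences cannot be controlled by a Lipschitz bound near zero. Here Assumption 1 ($\alpha\le\lVert\bu\rVert$ on $I$), together with the smallness condition $\max(h,\Delta t,\nu_{T,1},\nu_{T,2})\le\alpha/(4\sqrt{C^*})$ and the defect-step bound from Remark \ref{rr}, guarantees that the discrete jumps remain bounded below, so that $s\mapsto\sqrt s$ is Lipschitz on the relevant range and the differences of the averaged factors are controllable; Assumption 2 controls the temporal variation of $\bu$ on $I$ across the first time step, where the truncation is nonsmooth. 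The resulting interface contributions are then absorbed using Lemma \ref{lem:nnl}, which trades the interface norms for a small multiple of $\nu_i\lVert\nabla(\cdot)\rVert^2$ plus $L^2$ terms; the restriction $\min(h,\Delta t)<C(\nu_i+h_i)/\kappa$ is precisely what makes this absorption quantitatively possible.

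Finally I would sum the inequality over $j=1,\dots,n$, absorb every gradient term carrying a sufficiently small coefficient into the left-hand dissipation, and collect the remaining contributions into the right-hand side: the initial-level ($j=0,1$) data, the interpolation errors at order $h^{2k}$, the subgrid residuals at order $\nu_{T,1}^2+\nu_{T,2}^2$, and the backward-Euler consistency terms, whose differenced-and-summed form contributes at the order $\Delta t^2$ claimed in \eqref{timederiverraccuracy} under the stated regularity (with Assumption 2 absorbing the nonsmooth first step). Applying the discrete Gronwall lemma (Lemma \ref{gron}), whose hypothesis $\alpha_j\Delta t<1$ is ensured by the stated step restrictions, then yields the bound \eqref{timederiverraccuracy}. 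I expect the interface-difference estimate of the previous paragraph to be the decisive technical obstacle; the subgrid and convection terms, while new or involved, follow the now-standard VMS and skew-symmetry arguments once the interface terms are under control.
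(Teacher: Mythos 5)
Your proposal follows essentially the same route as the paper: subtract the defect step from the weak formulation at $t^{n+1}$, decompose $e_i^{n+1}$ into an interpolation part $\eta_i^{n+1}$ and a discrete part $\phi_i^{n+1}$, difference the error equation between consecutive time levels, test with $s_i^{n+1}=\frac{\phi_i^{n+1}-\phi_i^n}{\Delta t}$, bound the differenced subgrid term via the $L^2$-projection stability/approximation properties and an inverse inequality to extract the $\nu_{T,i}^2$ contribution, handle the convection and geometrically averaged interface terms exactly as in Theorem 8 of \cite{ACEL18} using Assumptions 1--2 and Lemma \ref{lem:nnl}, and close with the discrete Gronwall lemma. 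The paper itself only sketches this, treating the new stabilization term explicitly and deferring the rest to \cite{ACEL18}, so your outline is consistent with (and somewhat more detailed than) the published argument.
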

\begin{proof}
In $\Omega_1$, at time level $n+1$, the true solution of \eqref{eq:atmo}-\eqref{eq:atmoBC} satisfies
	\begin{eqnarray}
	\lefteqn{(\frac{{u_1(t^{n+1})}-u_{1}(t^{n})}{\Delta t} ,v_{h,1})_{\Omega_1} +(\nu_1+{\nu_{T,1}})(\nabla  u_{1}(t^{n+1}),\nabla v_{h,1})_{\Omega_1}-(p_1(t^{n+1}),\nabla\cdot v_{h,1})_{\Omega_1}}
	\nonumber\\
	&&+\kappa \int_{I}(u_1(t^{n+1})-u_{2}(t^{n+1}))|[{\bf u}(t^{n+1})]|v_{h,1}ds+ {c_1(u_{1}(t^{n+1});u_{1}(t^{n+1}),v_{h,1})}\nonumber\allowdisplaybreaks\\&=&(\frac{{u_1(t^{n+1})}-u_{1}(t^{n})}{\Delta t}-\partial_{t}u_1(t^{n+1}) ,v_{h,1})_{\Omega_1}+{\nu_{T,1}}(\nabla  u_{1}(t^{n+1}),\nabla v_{h,1})_{\Omega_1}\nonumber\\&&+(\bff_1^{n+1},v_{h,1})_{\Omega_1} \label{err1}
	\end{eqnarray}
	for all $v_{h,1}\in X_1^h$.  By writing down the equation \eqref{BE7} for $i=1$ and subtracting it from \eqref{err1} gives the error equation. Then, for arbitrary $\tilde{u}_1^{n+1}\in X_1^h$ , the error is decomposed into
	\begin{eqnarray}
	e_1^{n+1}&=&u_1(t^{n+1})-\hat{u}_{h,1}^{n+1}=(\tilde{u_1}^{n+1}-\hat{u}_{h,1}^{n+1})-(\tilde{u_1}^{n+1}-u_1^{n+1}) := \phi_1^{n+1}-\eta_1^{n+1}
\nonumber
		\end{eqnarray}
Letting $v_{h,1}=\frac{\phi_1^{n+1}-\phi_1^n}{\Delta t} \in X_{1,h}$ in the resulting equation yields
	
\begin{equation}
\begin{aligned}
\left(\frac {e_1^{n+1}-e_1^n}{\Delta t}, \frac{\phi_1^{n+1}-\phi_1^n}{\Delta t} \right)_{\Omega_1}
+(\nu_1+\nu_{T,1})(\nabla e_1^{n+1},\nabla \frac{\phi_1^{n+1}-\phi_1^n}{\Delta t})_{\Omega_1}
\\
+c_1 (u_1^{n+1} ; u_1^{n+1},\frac{\phi_1^{n+1}-\phi_1^n}{\Delta t})-c_1(\hat{u}_{h,1}^{n+1}; \hat{u}_{h,1}^{n+1},\frac{\phi_1^{n+1}-\phi_1^n}{\Delta t})\\
-(p_1^{n+1}-\hat{p}_{h,1}^{n+1},\nabla \cdot \Big(\frac{\phi_1^{n+1}-\phi_1^n}{\Delta t}\Big))_{\Omega_1}
+\kappa \int_{I}(u_1^{n+1}-u_{2}^{n+1})|[{\bf u}^{n+1}]|\frac{\phi_1^{n+1}-\phi_1^n}{\Delta t}ds
\\
-\kappa \int_{I}|[\hat{\bu}_h^n]|\hat{u}_{h,1}^{n+1}\frac{\phi_1^{n+1}-\phi_1^n}{\Delta t}ds
+\kappa \int_{I}\hat{u}_{h,2}^n|[{\hat{\bu}}_h^n]|^{1/2}|[{ \hat{\bu}}_h^{n-1}]|^{1/2}\frac{\phi_1^{n+1}-\phi_1^n}{\Delta t}ds
\\
=\nu_{T,1}(\nabla u_1^{n+1}-\mathbb{G}_1^{\mathbb{H},n}, \nabla \Big(\frac{\phi_1^{n+1}-\phi_1^n}{\Delta t}\Big))_{\Omega_1}+(\frac{{u_1^{n+1}}-u_{1}^{n}}{\Delta t}-\partial_{t}u_1^{n+1},\frac{\phi_1^{n+1}-\phi_1^n}{\Delta t})_{\Omega_1} \label{es1}
\end{aligned}
\end{equation}
where $u(t^{n+1})=u^{n+1}$.
Then, write down similar error equation at the previous time level, subtract from \eqref{es1} and let $\rho:=\frac{{u_1^{n+1}}-u_{1}^{n}}{\Delta t}-\partial_{t}u_1^{n+1}$. Along with same choice $v_{h,1}=\frac{\phi_1^{n+1}-\phi_1^n}{\Delta t}:= s_1^{h,n+1} \in X_{1}^h$, we obtain
	
	\begin{multline}
\begin{aligned}
\lVert s_1^{n+1} \rVert^2-(s_1^{n+1},s_1^n)+(\nu_1+\nu_{T,1})\Delta t \lVert \nabla s_1^{n+1} \rVert^2 +c_1 (u_1^{n+1} ; u_1^{n+1},s_1^{n+1})-c_1(\hat{u}_{h,1}^{n+1}; \hat{u}_{h,1}^{n+1},s_1^{n+1})\\
+c_1 (u_1^{n} ; u_1^{n},s_1^{n+1})-c_1(\hat{u}_{h,1}^{n}; \hat{u}_{h,1}^{n},s_1^{n+1})
+\Delta t \left(\frac{\hat{p}_{h,1}^{n+1}-\hat{p}_{h,1}^{n}}{\Delta t}  -   \frac{p_1^{n+1}-p_1^n}{\Delta t}  , \nabla \cdot s_1^{n+1}     \right)\\
\\
+ \kappa \int_I u_1^{n+1} |[\bu^{n+1}]| s_1^{n+1} ds- \kappa \int_I u_1^n |[\bu^n]| s_1^{n+1} ds
-\kappa \int_I \hat{u}_{h,1}^{n+1} |[\hat{u}_{h}^n]| s_1^{n+1} ds\\
+\kappa \int_I \hat{u}_{h,1}^n |[\hat{\bu}_{h}^{n-1}]| s_1^{n+1} ds
-\kappa \int_I u_2^{n+1} |[\bu^{n+1}]| s_1^{n+1} ds
+\kappa \int_I u_2^n |[\bu^n]| s_1^{n+1} ds\\
+\kappa \int_I \hat{u}_{h,2}^n |[\hat{\bu}^n]|^{1/2}|[\hat{\bu}^{n-1}]|^{1/2} s_1^{n+1} ds
-\kappa \int_I \hat{u}_{h,2}^{n-1} |[\hat{\bu}_{h,2}^{n-1}]|^{1/2}|[\hat{\bu}_{h,2}^{n-2}]|^{1/2} s_1^{n+1} ds\\
=\nu_{T,1}\Delta t \left( \nabla (\frac{u_1^{n+1}
-u_1^n}{\Delta t}) -\frac{\mathbb{G}_1^{\mathbb{H},n}-\mathbb{G}_1^{\mathbb{H},n-1}}{\Delta t},\nabla s_1^{n+1} \right)
+\Delta t \left( \frac{\rho_1^{n+1}-{\rho_1^n}}{\Delta t},s_1^{n+1} \right)\\
+\Delta t \left(\frac{\eta_1^{n+1}-2 \eta_1^n+\eta_1^{n-1}}{(\Delta t)^2},s_1^{n+1}\right)
+(\nu_1+\nu_{T,1})\Delta t \left(\nabla (\frac{\eta_1^{n+1}
-\eta_1^n}{\Delta t}),\nabla s_1^{n+1}\right). \label{er2}
\end{aligned}
\end{multline}
Stabilization term in the right hand side of \eqref{er2} is treated in the following way.
Applying  Cauchy-Schwarz and Young's inequalities results in

\begin{eqnarray*}
\lefteqn{\nu_{T,1}\Delta t (\nabla \frac{u(t_{n+1}) - u(t_{n})}{\Delta t} - \frac{\mathbb{G}_1^{\mathbb{H},n} - \mathbb{G}_1^{\mathbb{H},n-1}}{\Delta t},\nabla
s^{h,n+1})} \nonumber\\
&\leq& \frac{C\nu_{T,1}^2 \Delta t}{\nu+\nu_{T,1}}\|\nabla \frac{u(t_{n+1}) - u(t_{n})}{\Delta t}\|^2 + \frac{C\nu_{T,1}^2  \Delta t}{\nu+\nu_{T,1}}\|\frac{\mathbb{G}_1^{\mathbb{H},n} - \mathbb{G}_1^{\mathbb{H},n-1}}{\Delta t}\|^2 + \epsilon \Delta t(\nu+\nu_{T,1})\|\nabla s^{h,n+1}\|^2.
\end{eqnarray*}

By the properties of the projection, error decomposition and the inverse inequality, the following result can be found

\begin{eqnarray*} \label{vms1}
\lefteqn{\|\frac{\mathbb{G}_1^{\mathbb{H},n} - \mathbb{G}_1^{\mathbb{H},n-1}}{\Delta t}\|^2 = \|P^H\nabla \frac{u_1^{h,n} - u_1^{h,n-1}}{\Delta t}\|^2} \nonumber\\
&\leq& \|\nabla \frac{u_1^{h,n} - u_1^{h,n-1}}{\Delta t}\|^2 \leq \|\nabla \frac{u(t_{n}) - u(t_{n-1})}{\Delta t}\|^2 + \|\nabla
(\frac{\eta_1^{n} - \eta_1^{n-1}}{\Delta t})\|^2 + \nu_{T,1}^{-2}\| s^{h,n}\|^2.
\end{eqnarray*}
Then, for the rest of the terms, following exactly same lines of the proof of Theorem 8 in \cite{ACEL18} yields the proof of the theorem.	
	\end{proof}
We now present the stability and accuracy of the correction step. The processes of the proofs are similar to the proof of the stability and accuracy results of the correction step in \cite{ACEL18}. By replacing $H_i$ by $\nu_{T,i}$ in the proof of Theorem 9 and Theorem 10 of \cite{ACEL18} one can obtain corresponding results.

\begin{theorem}[Stability of the correction step approximation]\label{thm:stab_second}
Let $\bf{\tilde{u}}^{n+1} \in X^h$ satisfy (\ref{eq:ddc2}) for each $ n \in
\left\{ 0,1,2,\cdots , \frac{T}{\Delta t} - 1 \right\}$ and $e_i^j= u_i^j-\hat{u}_i^j, \,i=1,2$.  Then $\exists \, C>0$
independent of $h, \, \Delta t$ such that ${\bf{\tilde{u}}}^{n+1}$ satisfies:

\begin{multline}
{\left\lVert {{\tilde{u}}_1}^{n+1} \right\rVert}^2 + {\left\lVert {{\tilde{u}}_2}^{n+1} \right\rVert}^2 + (\nu_1+\nu_{T,1}){\Delta t}\sum_{k=1}^{n+1}{\left\lVert \nabla {{\tilde{u}}_1}^{k} \right\rVert}^2 + (\nu_1+\nu_{T,2}){\Delta t}\sum_{k=1}^{n+1}{\left\lVert \nabla {{\tilde{u}}_2}^{k} \right\rVert}^2\\
 + {\kappa} \, {\Delta t} \int_I \left| {\tilde{u}}_1^{n+1}  |[{\bf {\tilde{u}}}^n]|^{1/2}
-{\tilde{u}}_2^n  |[{\bf {\tilde{u}}}^{n-1}]|^{1/2} \right|^2 ds + {\kappa} \, {\Delta t} \int_I \left| {\tilde{u}}_2^{n+1}  |[{\bf {\tilde{u}}}^n]|^{1/2}
-{\tilde{u}}_1^n  |[{\bf {\tilde{u}}}^{n-1}]|^{1/2} \right|^2 ds\\
\leq
\frac{C {\Delta t} }{\nu_1+\nu_{T,1}} \sum_{j=1}^{n}\Biggl[  \lVert \nabla e_1^{j+1} \rVert^2
+ \lVert  e_2^j \rVert  \lVert \nabla e_2^j \rVert  \lVert \nabla e_i^j \rVert^2
\\
+ \lVert \nabla e_i^{j+1} \rVert^2 + \lVert  e_1^{j+1} \rVert  \lVert \nabla e_1^{j+1} \rVert  \lVert \nabla e_i^j \rVert^2 + \lVert  e_1^j \rVert  \lVert \nabla e_1^j \rVert  \lVert \nabla e_i^j \rVert^2+ \lVert \nabla e_2^j \rVert^2  \Biggr]
\\
+\frac{\Delta t}{14(\nu_1+\nu_{T,1})} \sum_{j=1}^n (\lVert \nabla e_2^j \rVert^2
+\lVert  e_2^j \rVert  \lVert \nabla e_2^j \rVert  \lVert \nabla e_i^j \rVert^2    )
\\
+\frac{8\Delta t(\nu_1+\nu_{T,1}}{19} \sum_{j=1}^n \left\{  \Delta t ^2  \lVert \nabla (\frac{e_1^{j+1}-e_1^j}{\Delta t})  \rVert^2+\Delta t^2 C_{\nabla \hat{u}_t}^2)  \right\}
\\
+\frac{8\Delta t(\nu_1+\nu_{T,1})}{19} \sum_{j=1}^n
\Biggl[  (\Delta t)^2   \lVert \nabla \hat{u}_1^{j+1} \rVert^2        \lVert \nabla (\frac{e_1^{j+1}-e_1^j}{\Delta t})  \rVert^2
\\
+(\Delta t)^2     \lVert \nabla \hat{u}_1^{j+1} \rVert^2    C^2_{\nabla \hat{u}_{1,t}}
+(\Delta t)^2  \lVert \nabla \hat{u}_1^j \rVert^2 \lVert \nabla (\frac{e_1^{j+1}-e_1^j}{\Delta t})  \rVert^2
+ (\Delta t)^2   \lVert \nabla \hat{u}_1^j \rVert^2  C^2_{\nabla \hat{u}_{1,t}} \Biggr]
\\
+\frac{19 \Delta t}{(\nu_1+\nu_{T,1})} \sum_{j=1}^n \Biggl[  \nu_{T,1}^2 \lVert \nabla \hat{u}_1^{j+1} \rVert^2+ \lVert  \frac{f_1^{j+1}+f_1^j}{2} \rVert_{-1}^2 \Biggr] +    \frac{\Delta t C_{\nabla u^{n+1}}}{\nu_1+\nu_{T,1}} \sum_{j=1}^n \Biggl[  1
+ \kappa     \lVert \nabla e_i^{j+1} \rVert^2    \Biggr]
\\
+C\Delta t  \sum_{j=1}^n (\lVert  e_1^{j+1} \rVert^{1/2} \lVert \nabla e_1^{j+1} \rVert^{1/2} + \lVert  e_2^{j+1} \rVert^{1/2} \lVert \nabla e_2^{j+1} \rVert^{1/2}                                  )  \lVert \nabla e_i^{j+1} \rVert^2
\end{multline}
\end{theorem}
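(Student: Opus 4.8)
The plan is to test the momentum part of the correction step \eqref{eq:ddc2} with $v_{h,i}=\tilde{u}_{h,i}^{n+1}$ and the continuity constraint with $q_{h,i}=\tilde{p}_{h,i}^{n+1}$, then sum over $i=1,2$. On the left the time-derivative term becomes $\frac{1}{2\Delta t}\big(\|\tilde{u}_{h,i}^{n+1}\|^2-\|\tilde{u}_{h,i}^{n}\|^2+\|\tilde{u}_{h,i}^{n+1}-\tilde{u}_{h,i}^{n}\|^2\big)$ by the polarization identity, the diffusion term supplies the coercive $(\nu_i+\nu_{T,i})\|\nabla\tilde{u}_{h,i}^{n+1}\|^2$, the skew-symmetric form $c_i(\tilde{u}_{h,i}^{n+1};\tilde{u}_{h,i}^{n+1},\tilde{u}_{h,i}^{n+1})$ vanishes, and the two pressure pairings cancel. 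A key simplification is that, because $\tilde{u}_{h,i}^{n+1}$ is discretely divergence free against all of $Q_i^h$, the pressure-correction term $-(\tfrac{\hat{p}_{h,i}^{n+1}-\hat{p}_{h,i}^{n}}{2},\nabla\cdot\tilde{u}_{h,i}^{n+1})_{\Omega_i}$ on the right drops out entirely. What then has to be organized on the left is the nonlinear interface coupling.

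I would handle the interface contributions with the geometric-averaging \emph{completing the square} identity that underlies this family of schemes. Combining the diagonal terms $\kappa\int_I|[\tilde{\bu}_h^n]|(|\tilde{u}_1^{n+1}|^2+|\tilde{u}_2^{n+1}|^2)$ with half of the corresponding quantity at the previous level and with the cross terms $-\kappa\int_I|[\tilde{\bu}_h^n]|^{1/2}|[\tilde{\bu}_h^{n-1}]|^{1/2}(\tilde{u}_2^n\tilde{u}_1^{n+1}+\tilde{u}_1^n\tilde{u}_2^{n+1})$, one invokes
\[
\tfrac12|[\tilde{\bu}_h^n]|\,|\tilde{u}_1^{n+1}|^2+\tfrac12|[\tilde{\bu}_h^{n-1}]|\,|\tilde{u}_2^n|^2-|[\tilde{\bu}_h^n]|^{1/2}|[\tilde{\bu}_h^{n-1}]|^{1/2}\tilde{u}_1^{n+1}\tilde{u}_2^n=\tfrac12\big||[\tilde{\bu}_h^n]|^{1/2}\tilde{u}_1^{n+1}-|[\tilde{\bu}_h^{n-1}]|^{1/2}\tilde{u}_2^n\big|^2
\]
together with its index-swapped version, yielding the two coercive interface squares displayed in the statement while the remaining half of the diagonal term telescopes across time levels.

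For the right-hand side I would bound each correction term by Cauchy--Schwarz and Young's inequality, absorbing the coercive gradient and interface squares into the left under the smallness hypotheses on $\max(h,\Delta t,\nu_{T,i})$ (of the same type as in Theorem~\ref{thm:convtimederiv}) so that the absorbed constants stay harmless. Throughout I would substitute $\hat{u}_{h,i}=u_i-e_i$: the deferred-correction and artificial-viscosity terms $\tfrac{\Delta t(\nu_i+\nu_{T,i})}{2}(\nabla\tfrac{\hat{u}_{h,i}^{n+1}-\hat{u}_{h,i}^n}{\Delta t},\nabla\tilde{u}_{h,i}^{n+1})$ and $\nu_{T,i}(\nabla\tfrac{\hat{u}_{h,i}^{n+1}+\hat{u}_{h,i}^n}{2},\nabla\tilde{u}_{h,i}^{n+1})$ then split into true-solution pieces controlled by the regularity constants $C_{\bu},C_{\nabla\bu_{i,t}}$ and error pieces producing the $\|\nabla(\tfrac{e_i^{j+1}-e_i^j}{\Delta t})\|^2$ and $\nu_{T,i}^2\|\nabla\hat{u}_i^{j+1}\|^2$ contributions. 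The convective-correction differences $\tfrac12 c_i(\hat{u}_{h,i}^{n+1};\hat{u}_{h,i}^{n+1},\cdot)-\tfrac12 c_i(\hat{u}_{h,i}^{n};\hat{u}_{h,i}^{n},\cdot)$ are estimated with the trilinear bound of Lemma~\ref{Standard_ineqs}, generating the $\|e^{j+1}\|^{1/2}\|\nabla e^{j+1}\|^{1/2}\|\nabla e_i^{j+1}\|^2$ products, and the forcing average with the $\|\cdot\|_{-1}$ dual norm.

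The main obstacle will be the cluster of cubic interface-correction integrals carrying the half-power weights $|[\hat{\bu}_h^\ell]|^{1/2}$ and the discrete increment $\tfrac{|[\hat{\bu}_h^{n+1}]|-|[\hat{\bu}_h^n]|}{\Delta t}$. The hard part is to turn these into the error quantities of the statement: I would use the trace estimate of Lemma~\ref{Standard_ineqs} and the nonlinear interface bounds of Lemma~\ref{lem:nnl}, the reverse-triangle inequality $\big||[\hat{\bu}_h^{n+1}]|-|[\hat{\bu}_h^n]|\big|\le|[\hat{\bu}_h^{n+1}-\hat{\bu}_h^n]|$ to convert the increment of $|[\hat{\bu}_h]|$ into an increment of $\hat{\bu}_h$, Assumption~1 to dominate the geometric-average weights by a single jump norm through the lower bound $\alpha\le\|\bu\|_I$, and Assumption~2 to absorb the first time levels where $\partial_t\bu$ is only $O((\Delta t)^{1/4})$. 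Finally, summing over $j=1,\dots,n$ telescopes the time-derivative and interface boundary terms down to $\|\tilde{u}_i^{n+1}\|^2$ plus the step-one data, and since no $\tilde{u}$ survives on the right the claimed bound follows directly, with no Gronwall argument needed at this stage (that is reserved for converting this stability estimate into the accuracy result).
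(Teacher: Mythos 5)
Your proposal is correct and follows essentially the same route as the paper, which in fact gives no proof of its own but defers verbatim to the energy argument of Theorem 9 in \cite{ACEL18} (with $H_i$ replaced by $\nu_{T,i}$): test with $\tilde{u}_{h,i}^{n+1}$, exploit skew-symmetry and the discrete divergence-free condition, complete the square in the geometrically averaged interface terms, and bound the deferred-correction right-hand side via the error decomposition, Lemmas \ref{Standard_ineqs}--\ref{lem:nnl}, and Assumptions 1--2, with no Gronwall step needed since no $\tilde{u}$ terms survive on the right. The only refinement worth recording is that Assumption 1 is used together with the smallness condition $\max(h,\Delta t,\nu_{T,1},\nu_{T,2})\le \alpha/(4\sqrt{C^*})$ to bound $|[\hat{\bu}_h^{\ell}]|$ away from zero before dividing by $|[\hat{\bu}_h^{n+1}]|^{1/2}+|[\hat{\bu}_h^{n}]|^{1/2}$, which your outline implies but does not state explicitly.
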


\begin{theorem}[Accuracy of Correction Step]\label{thm:convcorrectn}
Let the assumptions of Theorems \ref{thm:convdefect} and \ref{thm:convtimederiv} be satisfied. Then
$\exists C >0$ independent of $h, \, \Delta t$ such that for any $n \in \{ 0,1,2,\cdots , M-1=\frac{T}{\Delta t} - 1 \}$, the solution $\tilde{u}_i^{n+1}$ of (\ref{eq:ddc2}) satisfies

\begin{multline}\label{corrconv1}
\lVert {\bf u}^{n+1} - {\bf \tilde{u}}^{n+1} \rVert^2+(\nu+\nu_{T,1})\Delta t \sum_{j=1}^{n+1}\lVert \nabla (u_1^j - \tilde{u}_1^j) \rVert^2 + (\nu+H_2)\Delta t \sum_{j=1}^{n+1}\lVert \nabla (u_2^j - \tilde{u}_2^j) \rVert^2
\\
\leq C\left(h^4 + h^2\Delta t^2 + \nu_{T,1}^4 + \nu_{T,1}^2\Delta t^2 + \nu_{T,1}^4 + \nu_{T,2}^2\Delta t^2 + (\Delta t)^4\right)
\end{multline}
\end{theorem}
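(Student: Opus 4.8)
The plan is to adapt the accuracy argument of Theorem 10 of \cite{ACEL18}, replacing the constant artificial viscosity $H_i$ there by the subgrid parameter $\nu_{T,i}$ and carrying along the projection terms $\mathbb{G}_i^{\mathbb{H},n}$ from \eqref{alg4}. First I would establish \emph{consistency}. Writing the weak form \eqref{weak1} at $t^{n+1}$ and $t^n$ and averaging, a Taylor expansion about $t^{n+1/2}$ shows that the exact solution satisfies the correction scheme \eqref{eq:ddc2} up to an $O((\Delta t)^2)$ residual. The whole point of the deferred-correction right-hand side is that the terms $\frac{\Delta t(\nu_i+\nu_{T,i})}{2}\nabla(\frac{\hat{u}_{h,i}^{n+1}-\hat{u}_{h,i}^n}{\Delta t})$, the half-sum $\nu_{T,i}\nabla(\frac{\hat{u}_{h,i}^{n+1}+\hat{u}_{h,i}^n}{2})$, the nonlinear difference $\tfrac12 c_i(\hat u^{n+1};\hat u^{n+1},\cdot)-\tfrac12 c_i(\hat u^n;\hat u^n,\cdot)$, the pressure jump $\frac{\hat p^{n+1}-\hat p^n}{2}$, and the interface difference quotients cancel the leading backward-Euler truncation error and remove the spurious $\nu_{T,i}$ diffusion introduced in the defect step. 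Because those correction terms are built from the defect solution $\hat u$ rather than the exact $u$, the residual also picks up the defect error $e_i$ and, crucially, its discrete time derivative $\frac{e_i^{n+1}-e_i^n}{\Delta t}$; controlling the latter is exactly the role of Theorem \ref{thm:convtimederiv}.

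Next I would subtract \eqref{eq:ddc2} from this consistent form to obtain the error equation for $E_i^{n+1}:=u_i(t^{n+1})-\tilde u_{h,i}^{n+1}$, and split $E_i^{n+1}=\phi_i^{n+1}-\eta_i^{n+1}$ with $\phi_i^{n+1}\in X_i^h$ and interpolation part $\eta_i^{n+1}$ controlled by \eqref{inp2}. Testing with $v_{h,i}=\phi_i^{n+1}$ and using the polarization identity on the discrete time derivative produces the telescoping quantity $\frac{1}{2\Delta t}(\|\phi_i^{n+1}\|^2-\|\phi_i^n\|^2+\|\phi_i^{n+1}-\phi_i^n\|^2)$, while coercivity of the viscous form yields the dissipation $(\nu_i+\nu_{T,i})\|\nabla\phi_i^{n+1}\|^2$ on the left-hand side. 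The pressure terms are removed by working with discretely divergence-free $\phi_i^{n+1}$ (or through the inf-sup condition) together with \eqref{inp}, and the projection terms are handled as in the defect proof: $\|\nabla u_i^{n+1}-\mathbb{G}_i^{\mathbb{H},n}\|$ is split through $P^H$ and bounded by $CH^k$ via \eqref{pro2}, which is the source of the $\nu_{T,i}^2(\nu_i+\nu_{T,i})^{-1}H^{2k}$-type contributions.

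The decisive work is the estimation of the nonlinear and interface contributions. For the trilinear terms I would add and subtract intermediate arguments, use skew-symmetry of $c_i$ and the bound in Lemma \ref{Standard_ineqs}, and absorb gradient factors into the viscous dissipation by Young's inequality, with the smooth factors controlled by $C_{\bu}$ and $C_{\nabla \bu_{i,t}}$ and the discrete gradients $\|\nabla\hat u_{h,i}\|$ bounded through the defect-step stability of Theorem \ref{thm:stab_first}. The geometrically averaged interface terms are the genuinely delicate part: after linearising the products $u_i|[\bu]|$ against $\hat u_{h,j}|[\hat{\bu}_h^n]|^{1/2}|[\hat{\bu}_h^{n-1}]|^{1/2}$, the half-power jumps $|[\cdot]|^{1/2}$ must be controlled from below, and this is exactly where \emph{Assumption 1} (the uniform bound $\alpha\le\|\bu\|$ on $I$) and \emph{Assumption 2} (the $O((\Delta t)^{1/4})$ control of $\partial_t\bu$ near $t=0$) enter, together with the trace inequalities of Lemmas \ref{Standard_ineqs} and \ref{lem:nnl}. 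The defect-correction remainders on the right involve $\frac{e_i^{n+1}-e_i^n}{\Delta t}$, whose $L^2$ and gradient norms are bounded by $C(h^{k}+\Delta t+\nu_{T,i})$ in Theorem \ref{thm:convtimederiv}; squaring and multiplying by the $\Delta t$ weights is what produces the mixed orders $h^2\Delta t^2$, $\nu_{T,i}^2\Delta t^2$ and $(\Delta t)^4$ appearing in \eqref{corrconv1}.

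Finally I would sum over $j=1,\dots,n$, move the terms proportional to $\|\phi_i^{j}\|^2$ onto the Gronwall sum, and impose the stated restrictions $\min(h,\Delta t)<C(\nu_i+h_i)/\kappa$ and $\max(h,\Delta t,\nu_{T,1},\nu_{T,2})\le\alpha/(4\sqrt{C^*})$ so that the interface form stays coercive and the hypothesis $\alpha_j\Delta t<1$ of Lemma \ref{gron} holds; then the discrete Gronwall Lemma \ref{gron} applies. Collecting the squared approximation error from \eqref{inp2}--\eqref{inp} (which for the Taylor--Hood pair contributes $h^4$), the projection error \eqref{pro2}, and the defect-derivative bound of Theorem \ref{thm:convtimederiv} yields the right-hand side of \eqref{corrconv1}. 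I expect the main obstacle to be the consistency and error control of the geometrically averaged interface terms: keeping the half-power jumps $|[\cdot]|^{1/2}$ at the correct order while matching them against the true bilinear interface term $u_i|[\bu]|$ requires the lower bound of Assumption 1 and a careful square-completion, and this is the one place where the argument genuinely departs from the single-domain SAV analysis.
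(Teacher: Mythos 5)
Your proposal follows essentially the same route as the paper, which gives no independent proof of this theorem but simply invokes Theorem 10 of \cite{ACEL18} with $H_i$ replaced by $\nu_{T,i}$; your outline (trapezoidal consistency about $t^{n+1/2}$, reduction of the residual to the defect error and its discrete time derivative controlled by Theorem \ref{thm:convtimederiv}, interface terms via Assumptions 1--2 and Lemmas \ref{Standard_ineqs}--\ref{lem:nnl}, then discrete Gronwall) is exactly that adaptation. One small correction: the correction step \eqref{eq:ddc2} contains no $\mathbb{G}_i^{\mathbb{H},n}$ term, so the projection enters only indirectly through the defect-step error bounds you already invoke, not as a separate term to be split through $P^H$ in this step.
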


\section{Computational Test}

For a verification of the proposed convergence results and a complete accuracy comparison of SAV-DDC and AV-DDC, a manufactured true solution in $\Omega = \Omega_1 \cup \Omega_2$ with $\Omega_1 = [0,1] \times [0,1]$ and $\Omega_2=[0,1] \times [0,-1]$ will be employed:

\begin{equation*}
\begin{aligned}
u_{1,1} &= a\nu_1e^{-2t}x^2(1 - x)^2(1 + y) + ae^{-t}x(1 - x)\nu_1/\sqrt{\kappa a}\\
u_{1,2} &= a\nu_1e^{-2t}xy(2 + y)(1 - x)(2x-1) + ae^{-t}y(2x - 1)\nu_1/\sqrt{\kappa a} \\
u_{2,1} &= a\nu_1 e^{-2t}x^2(1 - x)^2(1 + \frac{\nu_1}{\nu_2}y) \\
u_{2,2} &= a\nu_1 e^{-2t}xy(1 - x)(2x - 1)(2 + \frac{\nu_1}{\nu_2}y).
\end{aligned}
\end{equation*}
Pressures in both domains are taken to be zero everywhere. Also, forcing functions and initial and boundary values are computed so that they comply with the given manufactured true solutions. The interface drag coefficient $\kappa=1$, and the final time $T=1$ are fixed through computations, as we change $a$, $\nu_1$, and $\nu_2$ from one computation to the other; this way computations can be performed for varying Reynolds numbers. Instead of creating a new coarse mesh for the projections of gradients, one can choose a less degree polynomials, see \cite{VS}. Therefore, Taylor-Hood finite elements $(P2/P1)$ for the velocity and pressure pairs are chosen while projections are taken onto $P1$ finite element space. Also discretization parameters, $h$, $\Delta t$ and the eddy viscosity parameter $\nu_T$ have been initially set to the same quantity $1/8$ and halved all together. As a result, a first order of accuracy is expected with the first step approximation while a second order of accuracy should be obtained with the correction step.

For the first $(i=1)$ and the correction $(i=2)$ step approximations, $u_i^h$ and the true solution, $u$ define global errors (total error in two sub-domains as a whole) by:

\vspace*{-0.15cm}
\begin{eqnarray}
||e_i||_{L^2} = ||\,u-u_i^h\,||_{L^2(0,T;L^2(\Omega))}, \nonumber\\
||e_i||_{H^1} = ||\,u-u_i^h\,||_{L^2(0,T;H^1(\Omega))}. \nonumber
\end{eqnarray}

\begin{table}[H]
\caption{Errors and Convergence Rates (CR) with AV-DDC, $\nu_1=0.5$, $\nu_2=0.1$, $a = 1$. \label{table1}}
\begin{tabular}{c|c|c|c|c||c|c|c|c|}
\cline{2-9}
                         & \multicolumn{4}{c||}{First Step} & \multicolumn{4}{c|}{Correction Step} \\ \hline
\multicolumn{1}{|l|}{$1/h$}  & $||e_1||_{L^2}$  & CR & $||e_1||_{H^1}$  & CR & $||e_2||_{L^2}$ & CR & $||e_2||_{H^1}$ & CR \\ \hline
\multicolumn{1}{|l|}{8}  & 3.72374e-03  & -     & 3.05238e-02    & -    & 1.60148e-03   & -    & 1.53617e-02 & -    \\ \hline
\multicolumn{1}{|l|}{16} & 2.38974e-03  & 0.64  & 2.24897e-02    & 0.44 & 7.01101e-04   & 1.19 & 6.26516e-03 & 1.29 \\ \hline
\multicolumn{1}{|l|}{32} & 1.40266e-03  & 0.77  & 1.13031e-02    & 0.99 & 2.52251e-04   & 1.47 & 2.22988e-03 & 1.49 \\ \hline
\multicolumn{1}{|l|}{64} & 7.70160e-04  & 0.86  & 6.23816e-03    & 0.86 & 7.79316e-05   & 1.69 & 6.91693e-04 & 1.69 \\ \hline
\end{tabular}
\end{table}

\begin{table}[H]
\caption{Errors and Convergence Rates (CR) with SAV-DDC, $\nu_1=0.5$, $\nu_2=0.1$, $a = 1$. \label{table2}}
\begin{tabular}{c|c|c|c|c||c|c|c|c|}
\cline{2-9}
                         & \multicolumn{4}{c||}{First Step} & \multicolumn{4}{c|}{Correction Step} \\ \hline
\multicolumn{1}{|l|}{$1/h$}  & $||e_1||_{L^2}$  & CR & $||e_1||_{H^1}$ & CR & $||e_2||_{L^2}$ & CR & $||e_2||_{H^1}$ & CR   \\ \hline
\multicolumn{1}{|l|}{8}  & 1.13217e-03          & -    & 1.20279e-02   & -    & 5.43879e-04   & -    & 8.87426e-03   & -    \\ \hline
\multicolumn{1}{|l|}{16} & 4.01572e-04          & 1.50 & 3.87974e-03   & 1.63 & 1.27978e-04   & 2.09 & 2.25343e-03   & 1.98 \\ \hline
\multicolumn{1}{|l|}{32} & 1.52022e-04          & 1.40 & 1.38604e-03   & 1.48 & 2.88961e-05   & 2.15 & 5.62279e-04   & 2.00 \\ \hline
\multicolumn{1}{|l|}{64} & 6.13662e-05          & 1.31 & 5.65840e-04   & 1.29 & 6.66311e-06   & 2.12 & 1.40459e-04   & 2.00 \\ \hline
\end{tabular}
\end{table}

\begin{table}[H]
\caption{Errors and Convergence Rates (CR) with AV-DDC, $\nu_1=0.005$, $\nu_2=0.001$, $a = 1/\nu_1$. \label{table3}}
\begin{tabular}{c|c|c|c|c||c|c|c|c|}
\cline{2-9}
                         & \multicolumn{4}{c||}{First Step} & \multicolumn{4}{c|}{Correction Step} \\ \hline
\multicolumn{1}{|l|}{$1/h$}  & $||e_1||_{L^2}$  & CR & $||e_1||_{H^1}$  & CR & $||e_2||_{L^2}$ & CR & $||e_2||_{H^1}$ & CR \\ \hline
\multicolumn{1}{|l|}{8}  & 2.35046e-02  & -     & 1.82484e-01    & -    & 1.99361e-02   & -    & 1.58978e-01 & -    \\ \hline
\multicolumn{1}{|l|}{16} & 2.06881e-02  & 0.18  & 1.62522e-01    & 0.17 & 1.46014e-02   & 0.45 & 1.23171e-01 & 0.37 \\ \hline
\multicolumn{1}{|l|}{32} & 1.57385e-02  & 0.39  & 1.29466e-01    & 0.33 & 8.10514e-03   & 0.85 & 7.98903e-02 & 0.62 \\ \hline
\multicolumn{1}{|l|}{64} & 1.04041e-02  & 0.60  & 9.24185e-02    & 0.49 & 3.67593e-03   & 1.14 & 4.67579e-02 & 0.77 \\ \hline
\end{tabular}
\end{table}

\begin{table}[H]
\caption{Errors and Convergence Rates (CR) with SAV-DDC, $\nu_1=0.005$, $\nu_2=0.001$, $a = 1/\nu_1$. \label{table4}}
\begin{tabular}{c|c|c|c|c||c|c|c|c|}
\cline{2-9}
                         & \multicolumn{4}{c||}{First Step} & \multicolumn{4}{c|}{Correction Step} \\ \hline
\multicolumn{1}{|l|}{$1/h$}  & $||e_1||_{L^2}$  & CR & $||e_1||_{H^1}$ & CR & $||e_2||_{L^2}$ & CR & $||e_2||_{H^1}$ & CR   \\ \hline
\multicolumn{1}{|l|}{8}  & 9.40918e-03          & -    & 8.09756e-02   & -    & 7.62025e-03   & -    & 6.96130e-02   & -    \\ \hline
\multicolumn{1}{|l|}{16} & 3.77792e-03          & 1.32 & 3.68978e-02   & 1.13 & 2.57658e-03   & 1.56 & 2.94766e-02   & 1.24 \\ \hline
\multicolumn{1}{|l|}{32} & 1.25924e-03          & 1.59 & 1.43057e-02   & 1.37 & 6.59626e-04   & 1.97 & 1.08124e-02   & 1.45 \\ \hline
\multicolumn{1}{|l|}{64} & 4.15235e-04          & 1.60 & 5.05508e-03   & 1.50 & 1.49754e-04   & 2.14 & 3.52645e-03   & 1.62 \\ \hline
\end{tabular}
\end{table}

Tables \ref{table2}-\ref{table4} clearly illustrate that SAV-DDC achieves the proposed order of accuracies. Even though AV-DDC should likewise provide the same order of accuracy, errors due to this method are significantly high compared to that of SAV-DDC. This can be associated with introducing the AV on all scales, which reduces spatial accuracy. On the other hand, using SAV instead of the first step seems to accelerate convergence and produce better accuracy results.

Next, we present computational results for a qualitative comparison of AV-DDC and SAV-DDC. The problem has been constructed so that a parabolic inflow in the top domain passes a circular (cylindrical) obstacle, see the domains in the Figure \ref{fig:domain}. The lower domain of the problem is a $4\times1$ rectangle. Also, the top domain is a $6\times1$ rectangular channel with a circle of radius $0.05$ centered at $(1, 0.5)$. The horizontal boundary of the top domain extends 1 unit from each side beyond the interface intending to improve the corner compatibility.
\vspace*{-0.25cm}
\begin{figure}[H]
	\includegraphics[width=0.5\linewidth]{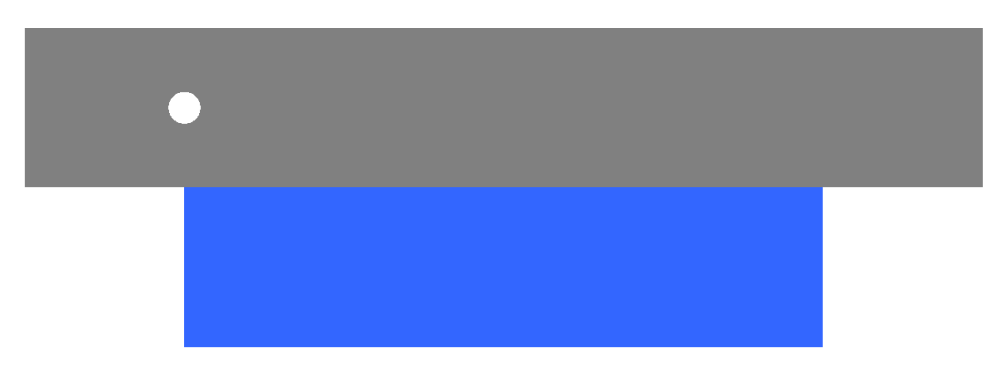}
	\caption{Flow domains}
	\label{fig:domain}
	\vspace*{-0.25cm}
\end{figure}
\vspace*{-0.25cm}
Parabolic flow with an average speed 1 are strongly enforced on all boundaries of the top domain except on the interface (i.e., velocity on the horizontal boundaries except on the interface is zero). Also, the same parabolic inflow is taken to be the initial velocity profile in the whole top domain. No-slip boundary conditions on all the walls of the lower domain have been implemented (except on the interface). The fluid in the lower domain is at rest initially, i.e., there is no flow. Additionally, the problem parameters have been chosen as in Table \ref{table:tableparameters}.

\vspace*{-0.25cm}
\begin{table}[!ht]
	\caption{Problem parameters}
	\begin{center} \label{table:tableparameters}
		\begin{tabular}{|c|c|c|c|c|c|c|c|}
			\hline $\nu_1$  &  $\nu_2$  & $\kappa$ & T   & $\Delta t$ & dofs          & $\nu_{T,1}=\nu_{T,2}$       \\
			\hline   1e-03  &    1      & 1 & 20         & 0.01       & 19748(in $\Omega_1$) - 6511(in $\Omega_2$)  & 0.01                        \\
			\hline
		\end{tabular}
	\end{center}
	\vspace*{-0.5cm}
\end{table}
\vspace*{-0.25cm}

Proceeding without a benchmark problem result, we do not have firm expectations from the flow. On the other hand, we can roughly predict possible outcomes based on our expertise with one domain flows. The setup on the top domain has been extensively used for a qualitative assessment of fluid flows, see, e.g., \cite{AL17,BOWERS20131225,V04}. By choosing $Re=100$, two vortices start to develop behind the cylinder, they then separate into the flow, and then, a vortex street forms. Additionally, the construction on the lower domain is similar to the known lid-driven cavity problem that has a varying velocity pattern on the lid due to the possible eddies above. Consequently, we expect to observe a vortex street formation on the top domain and its influence on the lower domain.

Counter plots of the velocity magnitude with AV-DDC and SAV-DDC are posted in the Figure \ref{fig:obs_comparison}. These plots illustrate that AV-DDC produces too dissipate results to capture vortex street formation on the top domain; it reaches a steady-state after $t=4$. Consequently, we cannot expect completely accurate results in the lower domain. On the other hand, AV-DDC and SAV-DDC seem to produce slightly similar results in the deep lower domain after $t=8$. This can be attributed to the mean flow character of the top domain; upon neglecting eddies, fluid flows horizontally from left to right at a certain speed. However, SAV-DDC seems to respond to the turbulent flow above the lower domain, which is not apparent with AV-DDC. In other words, flow speeds below and above the interface alter mutually, especially after $t=6$.

\begin{figure}[htp]
\begin{center}
\subfigure[AV-DDC at t=2]{\label{av2}\includegraphics[width=0.49\textwidth]{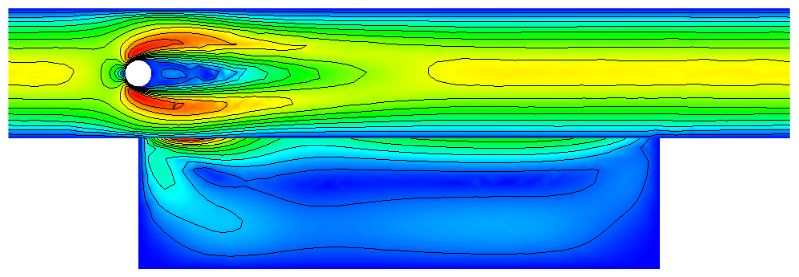}}
\subfigure[SAV-DDC at t=2]{\label{sav2}\includegraphics[width=0.49\textwidth]{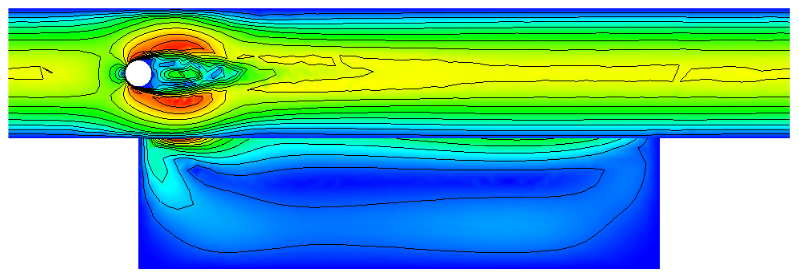}}  \\
\subfigure[AV-DDC at t=4]{\label{av4}\includegraphics[width=0.49\textwidth]{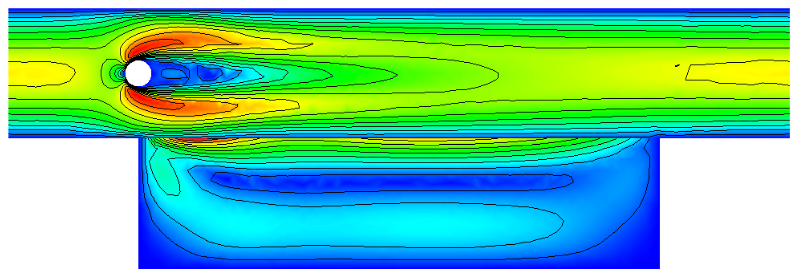}}
\subfigure[SAV-DDC at t=4]{\label{sav4}\includegraphics[width=0.49\textwidth]{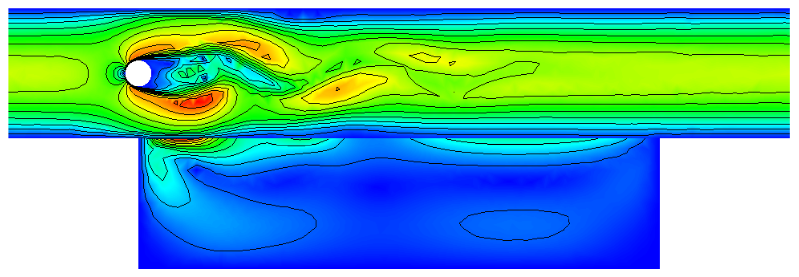}}  \\
\subfigure[AV-DDC at t=6]{\label{av6}\includegraphics[width=0.49\textwidth]{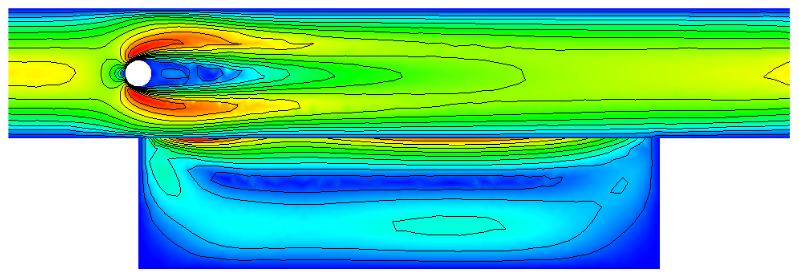}}
\subfigure[SAV-DDC at t=6]{\label{sav6}\includegraphics[width=0.49\textwidth]{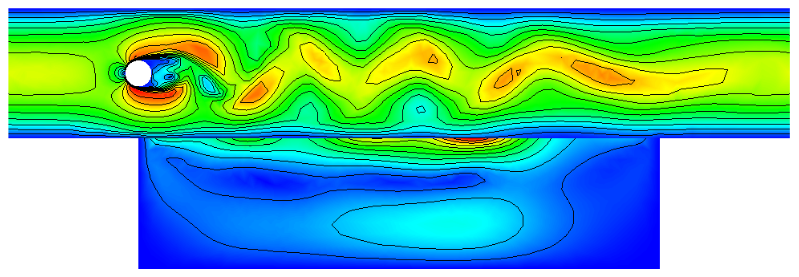}}  \\
\subfigure[AV-DDC at t=8]{\label{av8}\includegraphics[width=0.49\textwidth]{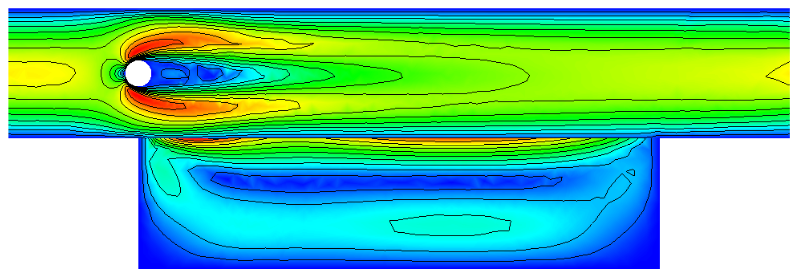}}
\subfigure[SAV-DDC at t=8]{\label{sav8}\includegraphics[width=0.49\textwidth]{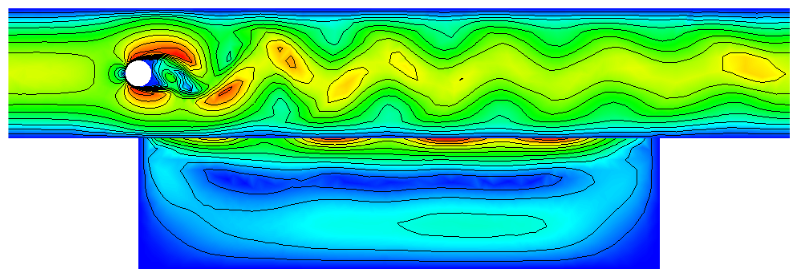}}  \\
\subfigure[AV-DDC at t=10]{\label{av10}\includegraphics[width=0.49\textwidth]{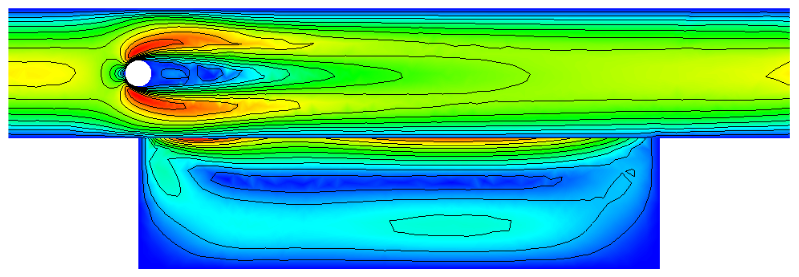}}
\subfigure[SAV-DDC at t=10]{\label{sav20}\includegraphics[width=0.49\textwidth]{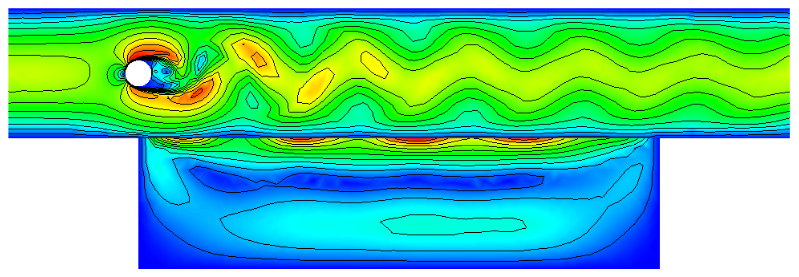}}  \\
\end{center}
\caption{Expected velocity magnitude contours with AV-DDC and SAV-DDC}
\label{fig:obs_comparison}
\vspace*{-0.65cm}
\end{figure}

\section{Conclusion}

In this paper, we develop a defect-deferred correction method based on subgrid artificial viscosity type stabilization in the predictor step for fluid-fluid interaction problems. While defect-deferred correction algorithms with this type of stabilization yields an unconditional stability and second-order convergence rate, with SAV-DDC, efficient and physically more credible approximations are obtained. The numerical examples illustrate the promise of the method when compared with the traditional AV based DDC method.


\end{document}